\newtheorem{thm}{Theorem}
\newtheorem{defn}{Definition}
\newtheorem{lemma}{Lemma}
\newtheorem{pro}{Proposition}
\newtheorem{rk}{Remark}
\newtheorem{hyp}{Hypothesis}
\numberwithin{equation}{section} \setcounter{tocdepth}{1}
\def\B{\mathcal B}
\def\R{\mathbb{R}}
\begin{document}
\title[Evolution algebra and dynamical systems of bisexual population]
{Constrained evolution algebras and dynamical systems of a bisexual population}
\author{Dzhumadil'daev A., Omirov B.A., Rozikov U.A.}

\address{A.Dzhumadil'daev\\ Institute of mathematics and mathematical modeling, Kazakhstan.}
 \email {dzhuma@hotmail.com}

\address{B.\ A.\ Omirov and  U.\ A.\ Rozikov\\ Institute of mathematics,
29, Do'rmon Yo'li str., 100125, Tashkent, Uzbekistan.}
\email {omirov@mail.ru\quad  rozikovu@yandex.ru}

\begin{abstract} Consider a bisexual population such that the set of
females can be partitioned into finitely many different types
indexed by $\{1,2,\dots,n\}$ and, similarly, that the male types are
indexed by $\{1,2,\dots,\nu \}$.  Recently an evolution algebra of bisexual population was introduced
by identifying the coefficients
of inheritance of a bisexual population as the structure constants
of the algebra. In this paper we study constrained evolution algebra
of bisexual population in which type ``1''
of females and males have preference. For such algebras
sets of idempotent and absolute nilpotent elements are known.
We consider two particular cases of this algebra, giving more constrains
on the structural constants of the algebra. By the first our constrain
we obtain an $n+\nu$- dimensional algebra with a matrix of structural constants
containing only 0 and 1. In the second case we consider $n=\nu=2$ but with general constrains.
In both cases we study dynamical systems generated by the quadratic evolution operators
of corresponding constrained algebras. We find all fixed points, limit points and
some 2-periodic points of the dynamical systems.  Moreover we study several properties
of the constrained algebras connecting them to the dynamical systems. We give some biological
interpretation of our results.

\end{abstract}
\maketitle

{\bf Mathematics Subject Classifications (2010).} 17D99; 60J10.

{\bf{Key words.}} bisexual population, evolution algebra, evolution operator, fixed point, limit point.

\section{Introduction.}

Population genetics is the study of the {\it distributions or states} and {\it changes} of allele frequency
in a population, as the population is subject to the four main evolutionary processes: natural
selection, genetic drift, mutation and gene flow. It also takes into account the factors of
recombination, population subdivision and population structure.

In this paper we study of evolution algebras and dynamical systems (of states)
of sex linked populations. Depending on the matrix of structural constants these algebras
are renamed by several distinct names. For example, in mathematical genetics,
a genetic algebra is a (possibly non-associative) algebra used to model inheritance in genetics.
Some variations of these algebras are called train algebras, special train algebras,
gametic algebras, Bernstein algebras, copular algebras, zygotic algebras and baric algebras
(also called weighted algebra). The study of these algebras was started by Etherington \cite{e1}.

In applications to genetics these algebras often have a basis corresponding
to the genetically different gametes and the structure constant of the algebra
encode the probabilities of producing offspring of various types.
The laws of inheritance are then encoded as algebraic properties of the algebra.

For surveys of genetic algebras see \cite{B}, \cite{ly}, \cite{m} and \cite{w}.

In \cite{t} an evolution algebra (EA) was introduced, (denoted by $E$)
which is an algebra over a field with a basis on
which multiplication is defined by the product of distinct basis terms being zero.
This EA is commutative,
 but not necessarily associative or power-associative \cite{t}.
 Under some conditions on the matrix of structural constants
 the algebra $E$ is a baric algebra \cite{CLR}.

In \cite{ly} an EA (denoted by $\mathcal A$) associated to the
free population is introduced and using this non-associative algebra
many results are obtained in explicit form, e.g. the explicit
description of stationary quadratic operators and the explicit
solutions of a nonlinear evolutionary equation in the absence of
selection, as well as general theorems on convergence to equilibrium
in the presence of selection. Note that the algebra $\mathcal A$ is a baric algebra.

 Recently, in \cite{LR} a bisexual population was considered and an EA (denoted by $\mathcal B$)
   using inheritance coefficients of the population was introduced. This algebra is a natural
 generalization of the algebra $\mathcal A$ of free population. Moreover, the evolution algebra of a bisexual
 population $\mathcal B$ is different from the EA $E$ defined in \cite{t}. In fact, the table of
 multiplications of  $E$ {\it non-zero} multiplications are {\it only} the square
of each basis element. However in $\mathcal B$ the square
of {\it each} basis element is zero. Note also that the algebra $\mathcal B$ is never baric, but it is a dibaric
algebra.

In \cite{V} a notion of gonosomal algebra is introduced. This algebra extends the evolution algebra
of the bisexual population $\mathcal B$.
It is shown that gonosomal algebras represent algebraically a wide
variety of sex determination systems observed in bisexual populations.
Moreover, it was shown that unlike $\mathcal B$
the gonosomal algebra is not dibaric, in general.

Note that each multiplication table of an $n$-dimensional algebra $A$ ($n$ can be infinite)
over a field $K$ generates an operator $V$ from $K^n$ to itself. The properties of the algebras
can be studied by the dynamical systems generated by the operator $V$. Note that for $E$ the operator
$V$ is a linear operator \cite{t}, for other mentioned above algebras the
operator $V$ is a quadratic operator \cite{CLR},\cite{gar}, \cite{LLR}, \cite{ly}, \cite{Ro},\cite{RV}.

  In this paper we study constrained evolution algebra
of bisexual population $\mathcal B$ in which type ``1''
of females and males have preference. For such algebras
sets of idempotent and absolute nilpotent elements are known \cite{LR}.
In Section 2 we give necessary definitions. Section 3 is devoted to dynamical systems
generated by the corresponding operator $V$.
We consider two particular cases: giving hard constrains
on the structural constants of the algebra. By the first our constrain
we obtain an $n+\nu$- dimensional algebra with a matrix of structural constants
containing only 0 and 1. In the second case we consider $n=\nu=2$ but with general constrains.
In both cases we study quadratic dynamical systems. We find all fixed points, limit points and
some 2-periodic points of the dynamical systems. In Section 4 we study several properties
of the constrained algebras connecting them to the dynamical systems. In the last section we give some biological
interpretations of our results.

\section{Preliminaries}

  Assuming that the population is bisexual we suppose that the set of
females can be partitioned into finitely many different types
indexed by $\{1,2,\dots,n\}$ and, similarly, that the male types are
indexed by $\{1,2,\dots,\nu \}$ \cite{ly}. The number $n+\nu$ is called the
dimension of the population.

The following set is called simplex
$$S^{n-1}=\{x=(x_1,\dots, x_n)\in \R^n: \, x_i\geq 0, \, \sum_{i=1}^nx_i=1\}.$$

The population is described by its
state vector $(x,y)$ in $S^{n-1}\times S^{\nu-1}$. Vectors
$x$ and $y$ are the probability distributions of the females and
males over the possible types, i.e., $x\in S^{n-1}$ and $y\in S^{\nu-1}$.

Denote $S=S^{n-1}\times S^{\nu-1}$. We call the partition into types
hereditary if for each possible state $z=(x,y)\in S$ describing the
current generation, the state $z'=(x',y')\in S$ is uniquely defined
describing the next generation. This means that the association
$z\mapsto z'$ defines a map $V \colon S\to S$ called the evolution
operator.

For any point $z^{(0)}\in S$ the sequence $z^{(t)}=V(z^{(t-1)}),
t=1,2,\dots$ is called the trajectory of $z^{(0)}$.

 Let $P_{ik,j}^{(f)}$ and $P_{ik,l}^{(m)}$ be inheritance coefficients
 defined as the probability that a female offspring is type $j$ and, respectively,
 that a male offspring is of type $l$, when the parental pair is
 $ik$ $(i,j=1,\dots,n$; and $k,l=1,\dots,\nu)$. We have
\begin{equation}\label{2}
P_{ik,j}^{(f)}\geq 0, \ \ \sum_{j=1}^nP_{ik,j}^{(f)}=1; \ \
P_{ik,l}^{(m)}\geq 0, \ \ \sum_{l=1}^\nu P_{ik,l}^{(m)}=1.
\end{equation}

Let $z'=(x',y')$ be the state of the offspring population at the
birth stage. This state is obtained from inheritance coefficients as
\begin{equation}\label{3}
x'_j= \sum_{i,k=1}^{n,\nu}P_{ik,j}^{(f)}x_iy_k; \ \ y'_l=
\sum_{i,k=1}^{n,\nu} P_{ik,l}^{(m)}x_iy_k.
\end{equation}
We see from (\ref{3}) that for a bisexual population (BP) the evolution operator is a
quadratic mapping of $S$ into itself.

Following \cite{LR} we give an algebra structure on the vector space
 $\R^{n+\nu}$ which is closely related to the map defined by (\ref{3}).

 Consider $\{e_1,\dots,e_{n+\nu}\}$ the canonical basis on $\R^{n+\nu}$
 and divide the basis as $e^{(f)}_i=e_i$, $ i=1,\dots,n$ and $e^{(m)}_i=e_{n+i}$,
 $i=1,\dots,\nu$.

 Now introduce on $\R^{n+\nu}$ the table of multiplications defined by
\begin{equation}\label{4}
\begin{array}{ll}
e^{(f)}_ie^{(m)}_k = e^{(m)}_ke^{(f)}_i=\frac{1}{2} \left(\displaystyle\sum_{j=1}^nP_{ik,j}^{(f)}e^{(f)}_j+ \sum_{l=1}^{\nu}P_{ik,l}^{(m)}e^{(m)}_l\right),\\[3mm]
e^{(f)}_ie^{(f)}_j =0,\ \ i,j=1,\dots,n; \ \ e^{(m)}_ke^{(m)}_l =0,
\ \ k,l=1,\dots,\nu. \end{array} \end{equation}
Thus, we identify the coefficients of bisexual inheritance as the
structure constants of an algebra, i.e., a bilinear mapping of
$\R^{n+\nu}\times \R^{n+\nu}$ to $\R^{n+\nu}$.

The general formula for the multiplication is the extension of
(\ref{4}) by bilinearity, i.e., for $z,t\in\R^{n+\nu}$,
\[ z=(x,y)=\sum_{i=1}^nx_ie_i^{(f)}+\sum_{j=1}^\nu y_je_j^{(m)}, \ \ t=(u,v)=\sum_{i=1}^nu_ie_i^{(f)}+\sum_{j=1}^\nu v_je_j^{(m)}\]
using (\ref{4}), we obtain

\begin{align}\label{5}
 zt & = \frac{1}{2} {\displaystyle \sum_{k=1}^n\left(\sum_{i=1}^n\sum_{j=1}^\nu
P_{ij,k}^{(f)}(x_iv_j+u_iy_j)\right)}e^{(f)}_k \\ & {} +  \frac{1}{2} {\displaystyle\sum_{l=1}^\nu\left(\sum_{i=1}^n\sum_{j=1}^\nu
P_{ij,l}^{(m)}(x_iv_j+u_iy_j)\right)}e^{(m)}_l \, \notag.
\end{align}

 This algebraic interpretation is very useful. For
example, a BP state $z=(x,y)$ is an equilibrium (fixed point,
$V(z)=z$) precisely when $z$ is an idempotent element of the algebra $\R^{n+\nu}$.
The element $z$ is called an {\it absolute nilpotent} if $z^2=0$,
i.e., $z$ is zero-point of the evolution operator $V$.

The algebra ${\mathcal B}={\mathcal B}_V$ generated by the evolution
operator $V$ (see (\ref{3})) is called the {\it evolution algebra of
the bisexual population} (EABP) \cite{LR}. In \cite{LR} the basic properties of the EABP are studied.

Consider the following constrain on heredity coefficients (\ref{2}):
\begin{equation}\label{v1}P^{(f)}_{ik,j}=\left\{\begin{array}{lll}
a_{ij}, \ \ \mbox{if} \ \ k=1 \\[2mm]
1, \ \ \ \ \mbox{if} \ \ k\ne 1, j=1 \\[2mm]
0, \ \ \ \ \mbox{if} \ \ k\ne 1, j\ne 1
\end{array}\right. ,
\ \  P^{(m)}_{ik,l}=\left\{
\begin{array}{lll}
b_{kl}, \ \ \mbox{if} \ \ i=1 \\[2mm]
1, \ \ \ \ \mbox{if} \ \ i\ne 1, l=1 \\[2mm]
0, \ \ \ \ \mbox{if} \ \ i\ne 1, l\ne 1.
\end{array} \right.
\end{equation}
It is easy to see that the matrices $A=(a_{ij})$ and $B=(b_{kl})$ by (\ref{2}) satisfy the
following conditions
\begin{equation}\label{v}
 a_{ij}\geq 0, \ \ \sum_{j=1}^na_{ij}=1, \ \
i=1,\dots,n; \ \ b_{kl}\geq 0, \ \ \sum_{l=1}^\nu b_{kl}=1, \ \
k=1,\dots,\nu \, ,\end{equation} i.e. both matrices are stochastic.

The condition (\ref{v1}) has very clear biological
treatment: the type ``1'' of females and the type ``1'' of males
have preference, i.e., any type of female (male) can be born if its
father (mother) has type ``1''. If the father (mother) has type $\ne
1$ then only type ``1'' female (male) can be born.

Under condition (\ref{v1}) the multiplication (\ref{4}) became as

\begin{equation}\label{v2}
\begin{array}{ll}
e^{(f)}_ie^{(m)}_k = e^{(m)}_ke^{(f)}_i=\frac{1}{2}
\begin{cases}
\displaystyle \sum_{j=1}^na_{1j}e^{(f)}_j+
\sum_{l=1}^{\nu}b_{1l}e^{(m)}_l & \mbox{if} \ \ i=1,k=1\\[2mm]
\displaystyle\sum_{j=1}^na_{ij}e^{(f)}_j+ e^{(m)}_1 & \mbox{if} \ \ i\ne 1, k=1\\[2mm]
\displaystyle e^{(f)}_1+\sum_{l=1}^\nu b_{kl}e^{(m)}_l & \mbox{if} \ \ i =1, k\ne 1\\[4mm]
e^{(f)}_1+e^{(m)}_1 & \mbox{if} \ \ i\ne 1, k\ne 1 \, ,
\end{cases}\\[9mm]
e^{(f)}_ie^{(f)}_j =0,\ \ i,j=1,\dots,n; \ \qquad  e^{(m)}_ke^{(m)}_l =0,
\ \ k,l=1,\dots,\nu \, .
\end{array}
\end{equation}
The operator defined by (\ref{3}) has the following form
\begin{equation}\label{v3}\begin{array}{llll}
x'_1=\displaystyle\sum_{i=1}^n\left(a_{i1}y_1+\sum_{k=2}^\nu y_k\right)x_i\\[2mm]
x'_j=y_1\displaystyle\sum_{i=1}^na_{ij}x_i, \ \ j\ne 1\\[4mm]
y'_1=\displaystyle\sum_{k=1}^\nu\left(b_{k1}x_1+\sum_{i=2}^n x_i\right)y_k\\[2mm]
y'_l=x_1\displaystyle\sum_{k=1}^\nu b_{kl}y_k, \ \ l\ne 1 \, .
\end{array}
\end{equation}
Denote by $\B_1$ the EABP defined by the multiplication table
(\ref{v2}). This algebra was introduced in \cite{LR} and the sets of idempotent and
absolute nilpotent elements of the algebra $\B_1$ are described.
In this paper we continue the study of this algebra and corresponding dynamical systems.

\section{Dynamical systems generated by evolution operator (\ref{v3})}

Note that (\ref{v3}) maps $S$ to itself. Let $m\geq 1$ be a natural number.
If we write $z^{(m)}$ for the power $(\cdots(z^2)^2\cdots)$ ($m$
times) with $z^{(0)}\equiv z$ then the trajectory (or dynamical system (DS)) with initial state
$z$ is $V^m(z)=z^{(m)}$.
In this section we shall study
the behavior of the trajectory for each given initial point $z\in S$.
But this problem is a rather difficult in general, because the dynamical system generated by operator (\ref{v3}) is a complicated
non linear system. Therefore,  we shall consider two particular cases of the system.

\subsection{A simple case: a hard constrain.} Let us start with a particular
case of $\B_1$ and completely study the corresponding dynamical system.

Consider the case
\begin{equation}\label{v7}
a_{ij}=
\begin{cases}
1 & \mbox{if} \ \  i=j \\
0  & \mbox{if} \ \   i\ne j
\end{cases} \  \ , \ \  b_{kl}= \begin{cases}
1 & \mbox{if} \ \ k=l\\
0 & \mbox{if} \ \  k\ne l \, ,
\end{cases}
\end{equation}
then the operator defined by (\ref{v3}) has the following form
\begin{equation}\label{v3a} H:\left\{\begin{array}{llll}
x'_1=1-y_1(1-x_1)\\[2mm]
x'_j=y_1x_j, \ \ j=2,\dots,n\\[2mm]
y'_1=1-x_1(1-y_1)\\[2mm]
y'_l=x_1y_l, \ \ l=2,\dots, \nu.
\end{array}\right.
\end{equation}
Let Fix$(H)$ be the set of fixed points of $H$, i.e., $H(z)=z$, $z=(x,y)\in S$. By \cite[Proposition 10.6]{LR} we get
\begin{pro}\label{p1} The set of fixed points is
$$\begin{array}{ll}
{\rm Fix}(H)=\left\{((1,0,\dots,0),(y_1,\dots,y_\nu)):\displaystyle\sum_{k=1}^\nu y_k=1\right\}\cup\\[4mm]
\ \ \ \ \ \ \ \ \ \ \ \ \ \left\{((x_1,\dots,x_n),(1,0,\dots,0)): \displaystyle\sum_{i=1}^n x_i=1\right\}.
\end{array}
$$
\end{pro}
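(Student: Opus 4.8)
The plan is to solve the fixed-point system $H(z)=z$ by hand; the computation is short enough that essentially nothing beyond the cited \cite[Proposition 10.6]{LR} is needed. Writing $z=(x,y)\in S$, the equations $H(z)=z$ read
\[
x_1 = 1-y_1(1-x_1),\quad x_j = y_1x_j\ (j=2,\dots,n),\quad y_1 = 1-x_1(1-y_1),\quad y_l = x_1y_l\ (l=2,\dots,\nu).
\]
First I would simplify the two ``type-$1$'' equations. Expanding $x_1 = 1-y_1(1-x_1)$ gives $x_1(1-y_1)=1-y_1$, i.e. $(1-x_1)(1-y_1)=0$; and $y_1 = 1-x_1(1-y_1)$ collapses to the same identity. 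Hence at every fixed point either $x_1=1$ or $y_1=1$.

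Next I would feed each alternative back into the remaining equations, using that $x\in S^{n-1}$ and $y\in S^{\nu-1}$ have nonnegative coordinates summing to $1$. If $x_1=1$ then necessarily $x=(1,0,\dots,0)$; the equations $x_j=y_1x_j$ hold trivially (both sides are $0$), the equations $y_l=x_1y_l$ become $y_l=y_l$, and the two type-$1$ equations are satisfied, so $((1,0,\dots,0),y)$ is a fixed point for every $y\in S^{\nu-1}$. The case $y_1=1$ is symmetric under interchanging $(x,n)$ with $(y,\nu)$ and produces all points $((x_1,\dots,x_n),(1,0,\dots,0))$ with $x\in S^{n-1}$. Conversely, substituting any point of either family into (\ref{v3a}) returns the same point, so ${\rm Fix}(H)$ is exactly the union of the two families in the statement (these overlap only in the vertex $((1,0,\dots,0),(1,0,\dots,0))$).

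There is no serious obstacle here: the key observation is simply that the two nonlinear equations for $x_1$ and $y_1$ both reduce to $(1-x_1)(1-y_1)=0$, and the only thing requiring care is the use of the simplex constraints to pass from $x_1=1$ (resp. $y_1=1$) to the corresponding vertex of $S^{n-1}$ (resp. $S^{\nu-1}$); everything else is a routine case check. Equivalently, one may obtain the statement by specializing the description of the idempotents of $\B_1$ given in \cite[Proposition 10.6]{LR} to the choice of structural constants (\ref{v7}).
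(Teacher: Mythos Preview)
Your argument is correct. The key reduction $(1-x_1)(1-y_1)=0$ is exactly right, and the two cases together with the simplex constraints give precisely the two families in the statement.

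The paper itself does not carry out any computation: it simply invokes \cite[Proposition~10.6]{LR} (which classifies the idempotents of $\mathcal B_1$ for general stochastic $A$, $B$) and reads off the answer in the special case~(\ref{v7}). Your direct derivation is therefore more self-contained than the paper's treatment, and has the advantage of not requiring the reader to consult the external reference; the paper's approach, on the other hand, situates the result as an instance of a more general classification. Since you already note this equivalence in your final sentence, your proposal effectively subsumes the paper's argument.
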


Now we shall examine the type of the fixed points.

\begin{defn} (see \cite{D}). A fixed point $z$ of an operator $H$ is called hyperbolic if
its Jacobian $J$ at $z$ has no eigenvalues on the unit circle.
\end{defn}

\begin{defn} (see \cite{D}). A hyperbolic fixed point $z$ is called:
\begin{itemize}
\item attracting if all the eigenvalues of the Jacobi matrix $J(z)$ are less than 1 in
absolute value;
\item repelling if all the eigenvalues of the Jacobi matrix $J(z)$ are greater than 1 in
absolute value;
\item a saddle otherwise.
\end{itemize}
\end{defn}

To find the type of a fixed point of the operator (\ref{v3a}) we write
the Jacobi matrix, replacing $x_1$ by $1-\displaystyle\sum_{i=2}^nx_i$ and $y_1$
by $1-\displaystyle\sum_{j=2}^\nu y_j$, which is $(n-1)\times(\nu-1)$-matrix of the form:

$$J(s)=J_H(s)=\left(\begin{array}{cccccccc}
y_1&0&\dots&0&-x_2&-x_2&\dots&-x_2\\[3mm]
0&y_1&\dots&0&-x_3&-x_3&\dots&-x_3\\[3mm]
\vdots&\vdots&\dots&\vdots &\vdots &\vdots&\dots&\vdots\\[3mm]
0&0&\dots&y_1&-x_n&-x_n&\dots&-x_n\\[3mm]
-y_2&-y_2&\dots&-y_2&x_1&0&\dots&0\\[3mm]
-y_3&-y_3&\dots&-y_3&0&x_1&\dots&0\\[3mm]
\vdots&\vdots&\dots&\vdots &\vdots &\vdots&\dots&\vdots\\[3mm]
-y_\nu&-y_\nu &\dots&-y_\nu &0&0&\dots&x_1
\end{array}\right).$$
It is easy to see that $J(((1,0,\dots,0),(y_1,\dots,y_\nu)))$ has eigenvalues equal to 1, $y_1$, and $J(((x_1,\dots, x_n),(1,0,\dots,0)))$ has eigenvalues 1 and $x_1$ therefore
all fixed points are not hyperbolic.

The following theorem completely describes the limit points of trajectories for operator $H$.
\begin{thm}\label{ts} Let $z^{(0)}=((x^{(0)}_1,\dots,x_n^{(0)}),(y^{(0)}_1,\dots,y_\nu^{(0)}))\in S$ be an initial point
\begin{itemize}
\item[(i)] If $x_1^{(0)}\ne 0$ and $y_1^{(0)}\ne 0$
then
$$\lim_{m\to +\infty}z^{(m)}=\left\{\begin{array}{lll}
((1,0,\dots,0),(1,0,\dots,0)), \ \ \ \ \ \ \ \ \mbox{if} \ \ \ \ \ \ x_1^{(0)}=y_1^{(0)}\\[3mm]
((1,0,\dots,0),(1-x_1^{(0)}+y_1^{(0)},y^*_2,\dots,y^*_\nu)), \ \ \mbox{if} \ \ x_1^{(0)}>y_1^{(0)}\\[3mm]
((1+x_1^{(0)}-y_1^{(0)},x^*_2,\dots, x^*_n),(1,0,\dots,0)), \ \ \mbox{if} \ \ x_1^{(0)}<y_1^{(0)},
\end{array}\right.
$$
where
$$x_j^*={(y_1^{(0)}-x_1^{(0)})x^{(0)}_j\over 1-x^{(0)}_1}, \ \ j=2,\dots,n; \ \ y_l^*={(x_1^{(0)}-y_1^{(0)})y^{(0)}_l\over 1-y_1^{(0)}}, \ \ l=2,\dots,\nu.$$
\item[(ii)] If $x^{(0)}_1=0$ then
$$z^{(m)}=H(z^{(0)})=\left((1-y^{(0)}_1, y^{(0)}_1x_2^{(0)},\dots, y^{(0)}_1x_n^{(0)}), (1,0,\dots,0)\right), \ \ \mbox{for all} \ \ m\geq 1.$$

\item[(iii)] If $y^{(0)}_1=0$ then
$$z^{(m)}=H(z^{(0)})=\left((1,0,\dots,0),(1-x^{(0)}_1, x^{(0)}_1y_2^{(0)},\dots, x^{(0)}_1y_\nu^{(0)})\right), \ \ \mbox{for all} \ \ m\geq 1.$$
\end{itemize}
\end{thm}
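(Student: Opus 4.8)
The plan is to collapse the $(n+\nu)$-dimensional dynamics to a single scalar recursion carrying a conserved quantity. First I would dispose of the degenerate cases (ii) and (iii). If $x_1^{(0)}=0$, then one application of (\ref{v3a}) gives $y_l^{(1)}=x_1^{(0)}y_l^{(0)}=0$ for $l\ge 2$ and $y_1^{(1)}=1-x_1^{(0)}(1-y_1^{(0)})=1$, so the male component of $z^{(1)}$ equals $(1,0,\dots,0)$; by Proposition~\ref{p1} such a point is a fixed point, hence $z^{(m)}=z^{(1)}$ for all $m\ge 1$, which is assertion (ii). Assertion (iii) follows symmetrically, exchanging the roles of the female and male coordinates. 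Thus the substance is case (i), where $x_1^{(0)}\ne0$ and $y_1^{(0)}\ne0$.

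For case (i) I would set $\xi_m=1-x_1^{(m)}$ and $\eta_m=1-y_1^{(m)}$, both lying in $[0,1]$. From (\ref{v3a}) one reads off $\xi_{m+1}=y_1^{(m)}\xi_m=(1-\eta_m)\xi_m$ and $\eta_{m+1}=x_1^{(m)}\eta_m=(1-\xi_m)\eta_m$, so both sequences are non-increasing and bounded below, hence convergent; moreover $\eta_m-\xi_m=x_1^{(m)}-y_1^{(m)}$ is seen directly to be conserved, equal to $c:=x_1^{(0)}-y_1^{(0)}$ for every $m$. Monotonicity gives $x_1^{(m)}\ge x_1^{(0)}>0$ and $y_1^{(m)}\ge y_1^{(0)}>0$, so in case (i) the trajectory never leaves the region where both ``type 1'' frequencies are positive (precisely what distinguishes it from (ii)--(iii)). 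Since for $j\ge2$ the coordinate satisfies $x_j^{(m+1)}=y_1^{(m)}x_j^{(m)}$, the same multiplicative recursion as $\xi_m$ with the strictly positive factor $y_1^{(m)}$, the ratio $x_j^{(m)}/\xi_m$ is constant, so $x_j^{(m)}=x_j^{(0)}\xi_m/\xi_0$ when $\xi_0\ne0$ (and $x_j^{(m)}\equiv0$ when $\xi_0=0$); likewise $y_l^{(m)}=y_l^{(0)}\eta_m/\eta_0$ for $l\ge2$. Hence the whole trajectory is governed by the two monotone scalar sequences $\xi_m,\eta_m$.

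It remains to identify $\xi_\infty:=\lim\xi_m$ and $\eta_\infty:=\lim\eta_m$. Passing to the limit in the recursions gives $\xi_\infty=(1-\eta_\infty)\xi_\infty$, $\eta_\infty=(1-\xi_\infty)\eta_\infty$, with $\eta_\infty-\xi_\infty=c$. If $c=0$ then $\xi_\infty=\eta_\infty$ and $\xi_\infty=(1-\xi_\infty)\xi_\infty$ force $\xi_\infty=\eta_\infty=0$, so $x_1^{(m)},y_1^{(m)}\to1$ and all other coordinates tend to $0$: the first line of (i). If $c>0$ then $\eta_\infty=\xi_\infty+c\ge c>0$, so $\eta_\infty=(1-\xi_\infty)\eta_\infty$ forces $\xi_\infty=0$, whence $\eta_\infty=c$; thus $x_1^{(m)}\to1$, $x_j^{(m)}\to0$, $y_1^{(m)}\to1-c=1-x_1^{(0)}+y_1^{(0)}$, and $y_l^{(m)}=y_l^{(0)}\eta_m/\eta_0\to y_l^{(0)}c/(1-y_1^{(0)})=y_l^*$: the second line of (i). The case $c<0$ is the mirror image, yielding $\xi_\infty=-c$, $\eta_\infty=0$ and $x_j^{(m)}\to x_j^{(0)}(-c)/(1-x_1^{(0)})=x_j^*$: the third line. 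A routine check that $1-c\in[0,1]$ and $\sum_{l\ge2}y_l^*=c$ (resp. the symmetric statements) confirms the limit lies in $S$, in fact in ${\rm Fix}(H)$, consistently with Proposition~\ref{p1}.

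I expect no genuine obstacle here: the argument is essentially self-propelling once the conserved quantity $\eta_m-\xi_m$ and the exact proportionalities $x_j^{(m)}\propto\xi_m$, $y_l^{(m)}\propto\eta_m$ are noticed. In particular no delicate rate-of-convergence estimate is required, because the off-type coordinates are exactly proportional to the monotone sequences $\xi_m,\eta_m$ rather than merely comparable to them; the only mild bookkeeping is handling the degenerate boundary possibilities ($x_1^{(0)}=1$, $y_1^{(0)}=1$, $\xi_0=0$, $\eta_0=0$), each of which is trivial and already compatible with the stated formulas.
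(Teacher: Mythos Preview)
Your proof is correct and follows essentially the same approach as the paper: monotonicity of $1-x_1^{(m)}$ and $1-y_1^{(m)}$ to guarantee convergence, the conserved quantity $x_1^{(m)}-y_1^{(m)}$, and the fixed-point equations to pin down the limit. Your observation that $x_j^{(m)}/\xi_m$ and $y_l^{(m)}/\eta_m$ are exactly constant is a mild streamlining of the paper's version, which instead writes $y_l^{(m+1)}=y_l^{(0)}\prod_{k=0}^m x_1^{(k)}$ and $1-y_1^{(m+1)}=(1-y_1^{(0)})\prod_{k=0}^m x_1^{(k)}$ and computes the limit of the product from the already-known limit of $y_1^{(m)}$; the content is identical.
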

\begin{proof} From (\ref{v3a}) we get
\begin{equation}\label{v3b} H^{m+1}(z^{(0)})\,:\,\left\{\begin{array}{llll}
x^{(m+1)}_1=1-y^{(m)}_1(1-x^{(m)}_1)\\[2mm]
x^{(m+1)}_j=y^{(m)}_1x^{(m)}_j, \ \ j=2,\dots,n\\[2mm]
y^{(m+1)}_1=1-x^{(m)}_1(1-y^{(m)}_1)\\[2mm]
y^{(m+1)}_l=x^{(m)}_1y^{(m)}_l, \ \ l=2,\dots, \nu.
\end{array}\right.
\end{equation}

Since $y_1^{(m)}\in [0,1]$ we have
$$x^{(m+1)}_1=1-y^{(m)}_1(1-x^{(m)}_1)\geq 1-(1-x^{(m)}_1)=x_1^{(m)},$$
$$ x^{(m+1)}_j\leq x_j^{(m)}, \ \ j=2,\dots,n.$$
Thus $x^{(m)}_1$ is a non-decreasing sequence, which bounded from above by 1 and for each $j=2,\dots,n$
the sequence $x^{(m)}_j$ is non-increasing and with lower bound $0$. Consequently, each $x^{(m)}_j$ has a limit say $\alpha_j$, $j=1,\dots,n$.  Similarly one shows that
$y^{(m)}_l$ also has a limit, say $\beta_l$, $l=1,\dots,\nu$.
Hence, the limit $\displaystyle\lim_{m\to\infty}z^{(m)}=z_*$ exists. It is clear that $z_*\in$Fix$(H)$,
and  $z_*=z_*(z^{(0)})$, i.e.,
it depends on the initial point $z^{(0)}$. Now we shall find $z_*(z^{(0)})$.
Subtracting from the first equality of (\ref{v3b}) the third one we get
\begin{equation}\label{13e}
x_1^{(m+1)}-y_1^{(m+1)}=x_1^{(m)}-y_1^{(m)}, \ \ m=0,1,2,\dots
\end{equation}
Iterating these equalities we obtain
\begin{equation}\label{13e1}
x_1^{(m)}=y_1^{(m)}+(x_1^{(0)}-y_1^{(0)}), \ \ m=1,2,\dots
\end{equation}
{\sl Proof of} (i).
From the first and third equations of (\ref{v3b}), and (\ref{13e1}) for limit values $\alpha_1$ and $\beta_1$ we get the following equations
\begin{equation}\label{vab} \left\{\begin{array}{lll}
\alpha_1=1-\beta_1(1-\alpha_1)\\[2mm]
\beta_1=1-\alpha_1(1-\beta_1)\\[2mm]
\alpha_1=\beta_1+(x_1^{(0)}-y_1^{(0)}).
\end{array}\right.
\end{equation}
It is easy to see that the system (\ref{vab}) has following solution:
\begin{equation}\label{sol}
(\alpha_1,\beta_1)=\left\{\begin{array}{lll}
(1,1), \ \ \mbox{if} \ \ x_1^{(0)}=y_1^{(0)}\\[3mm]
(1, 1-x_1^{(0)}+y_1^{(0)}), \ \ \mbox{if} \ \ x_1^{(0)}>y_1^{(0)}\\[3mm]
(1+x_1^{(0)}-y_1^{(0)},1), \ \ \mbox{if} \ \ x_1^{(0)}<y_1^{(0)}.
\end{array}\right.
\end{equation}
Thus if $x_1^{(0)}=y_1^{(0)}$ then $z_*=z_*(z^{(0)})=((1,0,\dots,0),(1,0,\dots,0))$.
Consider now two cases:

{\it Case $x_1^{(0)}>y_1^{(0)}$}: In this case $y_1^{(0)}<1$. By (\ref{sol}) we have
\begin{equation}\label{lims}
\displaystyle\lim_{m\to\infty}x^{(m)}_j=0, \ \ j=2,\dots,n; \ \ \lim_{m\to\infty}y^{(m)}_1=1-x_1^{(0)}+y_1^{(0)}.
\end{equation}

Now we shall calculate $\displaystyle\lim_{m\to\infty}y^{(m)}_l$, $l=2,\dots,\nu$.
From the third and last equalities of (\ref{v3b}) we get
\begin{equation}\label{1q}
1-y^{(m+1)}_1=x^{(m)}_1(1-y^{(m)}_1)=x^{(m)}_1x^{(m-1)}_1(1-y^{(m-1)}_1)=\dots =(1-y_1^{(0)})\displaystyle\prod_{k=0}^mx^{(k)}_1.
\end{equation}
\begin{equation}\label{2q}
y^{(m+1)}_l=x^{(m)}_1y^{(m)}_l=x^{(m)}_1x^{(m-1)}_1y^{(m-1)}_l=\dots=y_l^{(0)}\displaystyle\prod_{k=0}^mx^{(k)}_1, \ \ l=2,\dots, \nu.
\end{equation}
Taking limit from both side of (\ref{1q}) and using (\ref{lims}) we obtain
$$\displaystyle\lim_{m\to\infty}\prod_{k=0}^mx^{(k)}_1={x_1^{(0)}-y_1^{(0)}\over 1-y_1^{(0)}}.$$
Using this equality from (\ref{2q}) we get
$$\displaystyle\lim_{m\to\infty}y_l^{(m)}={(x_1^{(0)}-y_1^{(0)})y^{(0)}_l\over 1-y_1^{(0)}}, \ \ l=2,\dots,\nu.$$

{\it Case $x_1^{(0)}<y_1^{(0)}$}: This case is similar to the previous case and one obtains
$$\displaystyle\lim_{m\to\infty}x_j^{(m)}={(y_1^{(0)}-x_1^{(0)})x^{(0)}_j\over 1-x_1^{(0)}}, \ \ j=2,\dots,n.$$
These equalities complete the proof of part (i).

{\sl Proof of} (ii) Let $x^{(0)}_1=0$, $y_1^{(0)}\ne 0$. In this case by (\ref{v3b}) we get
\begin{equation}\label{v3c} H(z^{(0)})\,:\,\left\{\begin{array}{llll}
x^{(1)}_1=1-y^{(0)}_1\\[2mm]
x^{(1)}_j=y^{(0)}_1x^{(0)}_j, \ \ j=2,\dots,n\\[2mm]
y^{(1)}_1=1\\[2mm]
y^{(1)}_l=0, \ \ l=2,\dots, \nu.
\end{array}\right.
\end{equation}
Thus by Proposition \ref{p1} we have $H(z^{(0)})\in$Fix$(H)$. This completes the proof of (ii).

Proof of (iii) is similar to the case (ii).\end{proof}

\subsection{All possible constrains for $n=\nu=2$.}
 Consider the case $n=\nu=2$. In this case since
 $$x_1+x_2=y_1+y_2=a_{11}+a_{12}=a_{21}+a_{22}=b_{11}+b_{12}=b_{21}+b_{22}=1$$
 denoting
 \begin{equation}\label{abc}
 a=a_{12}, \ \ b=a_{22}, \ \ c=b_{12},\ \ d=b_{22}, \ \ x=x_2, \ \ y=y_2
 \end{equation}
 the operator (\ref{v3}) can be reduced to the following operator
 \begin{equation}\label{22}
 T:\left\{\begin{array}{ll}
 x'=(1-y)(a+(b-a)x)\\[2mm]
 y'=(1-x)(c+(d-c)y),
 \end{array}\right.
 \end{equation}
 where $a,b,c,d,x,y\in [0,1]$.
 Thus we get a quadratic operator with four independent parameters.
 In this subsection we shall study the dynamical system
 generated by this operator.

 \subsubsection{Fixed points of (\ref{22})}
 To find fixed points of (\ref{22}) we should solve the following
\begin{equation}\label{2a}
 \begin{array}{ll}
 x=(1-y)(a+(b-a)x)\\[2mm]
 y=(1-x)(c+(d-c)y).
 \end{array}
 \end{equation}

Denote by Fix$(T)$ the set of all fixed points of the operator $T$ given by (\ref{22}).

From the first equation of the system we find
\begin{equation}\label{x1}
(1-(b-a)(1-y))x=a(1-y).
\end{equation}

{\it Case:} $(1-y)(b-a)=1$. In this case from (\ref{x1}) we get $a=0$ or $y=1$.
If $a=0$ then $(1-y)b=1$, i.e., $y=1-1/b$. This gives non-negative $y$ only if $b=1$, i.e. in this case $y=0$.
Under these assumptions from the second equation of the system (\ref{2a}) we get $c=0$ or $x=1$. Assume $c=0$ then it is easy
to see that $(x,0)$ is a solution of the system for any $x\in [0,1]$. In case $c\ne 0$ we get solution $(1,0)$.

{\it Case:} $(1-y)(b-a)\ne 1$. From (\ref{x1}) we obtain
\begin{equation}\label{x2}
x={a(1-y)\over a(1-y)+1-b(1-y)}.
\end{equation}
From this equality it is clear that for each $y\in [0,1]$ the corresponding $x$ also will be in $[0,1]$.
Substituting (\ref{x2}) in the second equation of (\ref{2a}) we get
\begin{equation}\label{y1}
[b(d-c)+a-b]y^2+[2bc-bd-a+b-c+d-1]y+c(1-b)=0.
\end{equation}

{\sl Subcase:} $b(d-c)=b-a$ and $c(b-1)=1-d$. Then $c(1-b)=0$. Under these conditions we get
$a=c=0$, $d=1$ and $b\ne 1$ which gives fixed points $(0,y)$ for any $y\in [0,1]$.
Moreover if $a=c\ne 0$, $b=d=1$, then the following points are fixed
$$P(y)=\left({a(1-y)\over a(1-y)+y},y\right), \ \ \forall y\in [0,1].$$

{\sl Subcase:} $b(d-c)=b-a$ and $c(b-1)\ne 1-d$.
Then the following point is a fixed point
$$P_0=\left({a(1-d)\over a(1-d)+c(1-b)+(1-b)(1-d)}, {c(1-b)\over c(1-b)+(1-d)}\right).$$

{\sl Subcase:} $b(d-c)<b-a$. In this case we have
$$D= [2bc-bd-a+b-c+d-1]^2-4[b(d-c)-(b-a)]c(1-b)>0.$$
Thus equation (\ref{y1}) has two solutions $y_1<y_2$.  By Vieta's theorem one can see that $y_1<0$.
For $y_2$ we have
$$y_2={(2bc-bd-a+b-c+d-1)+\sqrt{D}\over 2(b(c-d)+b-a)}.$$

{\sl Subcase:} $b(d-c)>b-a$. In this case we assume
$$D= [2bc-bd-a+b-c+d-1]^2-4[b(d-c)-(b-a)]c(1-b)>0.$$
Thus equation (\ref{y1}) has two solutions $0<y_1<y_2$, with

$$y_{1,2}={-(2bc-bd-a+b-c+d-1)\pm \sqrt{D}\over 2(b(d-c)+a-b)}.$$
Choose parameters $a,b,c,d$ such that $y_1$ or/and $y_2$ belong in $[0,1]$ then corresponding
value of $x$ is defined by (\ref{x2}).
Denote this fixed point by $Q_i=(x_i,y_i)$, $i=1,2$.

Summarizing we get the following

\begin{pro}\label{p2} The set {\rm Fix}$(T)$ has the following form

$${\rm Fix}(T)=\left\{\begin{array}{llllllll}
\{(0,0)\}, \, \mbox{if} \ \ a=c=0, b\ne 1, d\ne 1,\\[2mm]
\{(x,0), \, x\in[0,1]\}, \ \ \mbox{if} \ \ a=c=0, b=1,\\[2mm]
\{(0,y), \, y\in[0,1]\}, \ \ \mbox{if} \ \ a=c=0, d=1,\\[2mm]
\{(1,0)\}, \ \ \mbox{if} \ \ a=0, b=1, c\ne 0,\\[2mm]
\{P(y), \, y\in[0,1]\}, \ \ \mbox{if} \ \ a=c\ne 0, b=d=1,\\[2mm]
\{P_0\}, \ \ \ \mbox{if} \ \ b(d-c)=b-a, c(b-1)\ne 1-d,\\[2mm]
\{Q_2\}, \ \ \mbox{if} \ \ b(d-c)<b-a,\\[2mm]
\{Q_1,Q_2\}, \ \ \ \mbox{otherwise}
\end{array}
\right.
$$
\end{pro}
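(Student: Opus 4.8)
The plan is to carry out a careful case analysis of the system \eqref{2a}, organized exactly as in the discussion preceding the statement, and to check in each branch that the claimed points are indeed fixed, that they lie in $[0,1]^2$, and that no other fixed points arise. I would begin from \eqref{x1}, namely $(1-(b-a)(1-y))x = a(1-y)$, and split according to whether the coefficient $1-(b-a)(1-y)$ vanishes.

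First I would treat the degenerate branch $(1-y)(b-a)=1$. Since $b,a\in[0,1]$ and $1-y\in[0,1]$, the product $(1-y)(b-a)$ can equal $1$ only when $b=1$, $a=0$, $y=0$; one then substitutes into the second equation of \eqref{2a} to obtain $y = (1-x)c$, forcing $c=0$ (yielding the line $\{(x,0):x\in[0,1]\}$) or $x=1$ (yielding $(1,0)$ when $c\neq 0$). This accounts for the second and fourth rows of the table.

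Next comes the generic branch $(1-y)(b-a)\neq 1$, where \eqref{x2} expresses $x$ as a function of $y$, and one checks directly that the denominator $a(1-y)+1-b(1-y)$ is positive on $[0,1]$ (write it as $(1-b)+(1-y)(a+b\cdot\mathbf{1}-\text{stuff})$, or simply note it equals $1-(b-a)(1-y) > 0$ when $b\le 1$ after verifying the sign), so that $x\in[0,1]$ automatically. Substituting into the second equation produces the quadratic \eqref{y1} in $y$. Here I would split into the announced subcases according to the sign of the leading coefficient $b(d-c)-(b-a)$ and the vanishing of $c(1-b)$: (a) leading coefficient zero and $c(1-b)=0$ — this degenerates \eqref{y1} to a linear (or trivial) equation and, combining $b(d-c)=b-a$ with $c(1-b)=1-d$, forces either $a=c=0,\ d=1,\ b\neq1$ (the line $\{(0,y)\}$, third row) or $a=c\neq0,\ b=d=1$ (the curve $P(y)$, fifth row); (b) leading coefficient zero but $c(1-b)\neq 1-d$ — then \eqref{y1} is genuinely linear with the unique root $y = c(1-b)/(c(1-b)+(1-d))$, and plugging into \eqref{x2} gives $P_0$ (sixth row); (c) leading coefficient negative, $b(d-c)<b-a$ — Vieta's relations (product of roots $= c(1-b)/(b(d-c)-(b-a)) \le 0$) show one root is $\le 0$, leaving only $y_2\in[0,1]$, whence $Q_2$ (seventh row); (d) leading coefficient positive — both roots may be admissible, giving $\{Q_1,Q_2\}$ (the ``otherwise'' row), subject to the stated positivity of the discriminant $D$. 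The first row $\{(0,0)\}$ is recovered by noting that when $a=c=0$ with $b\neq1,d\neq1$, \eqref{x2} forces $x=0$ and then $y=0$.

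The main obstacle I anticipate is bookkeeping rather than conceptual: one must verify that the partition into the eight listed cases is exhaustive and mutually consistent (for instance, confirming that the conditions $a=c=0,\ b=1$ and $a=c=0,\ d=1$ are genuinely the only overlaps, occurring when $b=d=1$, and that the table's reading convention resolves them), and one must confirm in cases (c) and (d) that the discriminant condition $D>0$ is compatible with $a,b,c,d\in[0,1]$ and that the resulting $y_i$ do land in $[0,1]$ for suitable parameters — the statement only asserts these points are the fixed points \emph{when} the parameters are so chosen. I would also double-check the algebra producing \eqref{y1} from \eqref{x2} and the second equation of \eqref{2a}, since a sign error there would propagate through all the Vieta arguments. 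Once these verifications are in place, collecting the outcomes of the branches yields precisely the displayed description of $\mathrm{Fix}(T)$.
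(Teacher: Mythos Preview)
Your proposal is correct and follows essentially the same route as the paper: the proposition is simply a summary of the case analysis carried out in the text immediately preceding it, and your plan reproduces that analysis branch for branch (the split on whether $(1-y)(b-a)=1$, then the quadratic \eqref{y1} and its subcases according to the sign of the leading coefficient $b(d-c)-(b-a)$ and the vanishing of the linear term). One small slip to watch: in your subcase (a) the relevant second condition is $c(b-1)=1-d$ (i.e.\ the linear coefficient of \eqref{y1} vanishes once the leading one does), not $c(1-b)=1-d$; with the correct sign this forces $c(1-b)=0$ and $d=1$ as in the paper, and the rest of your branch goes through.
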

\begin{rk}
Note that the case $a=c=0$, $b=d=1$ is a particular case of operator (\ref{v3a}). Therefore in the sequel of this section we consider the case
when $a+c\ne 0$ or $b+d\ne 2$. Thus we will consider the cases where the fixed points $P(y)$, $P_0$, $Q_i$, $i=1,2$ exist.
\end{rk}
\subsubsection{Type of fixed points}
To study type of fixed points consider Jacobi matrix of the operator (\ref{22}) at a fixed point $z=(x,y)$:
$$J(z)=J_T(z)=\left(\begin{array}{cc}
(b-a)(1-y)&-(a+(b-a)x)\\[3mm]
-(c+(d-c)y)&(d-c)(1-x)
\end{array}\right).$$

For $P(y)$ we have $a=c\ne 0$ and $b=d=1$. In this case one can find two eigenvalues: $\lambda_1=1$ and
$$\lambda_2=\lambda_2(a,y)={(1-a)y-[a+(1-a)y]^2\over (1-a)y+a}.$$
Now we shall prove that $|\lambda_2|<1$ for any $a\in(0,1)$ and $y\in [0,1]$.
Denote $t=a+(1-a)y$. We have $t\in [a,1]$. The function $\lambda_2$ can be written as
$$\lambda_2=\lambda_2(t)=1-t-{a\over t}.$$ It is easy to see that
$$-a=\lambda_2(a)=\lambda_2(1)=\min_{t\in [a,1]}\lambda_2(t)\leq \lambda_2(t)\leq \max_{t\in [a,1]}\lambda_2(t)=\lambda_2(\sqrt{a})=1-2\sqrt{a}.$$
From this inequalities we get
$$|\lambda_2|\leq \max\{a, |1-2\sqrt{a}|\}<1, \ \ \forall a\in (0,1).$$
Thus the fixed point $P(y)$ is not hyperbolic.

At the fixed point $P_0$ the eigenvalues of $J(P_0)$ has very long expression.
Using Maple and varying the parameters $a,b,c,d$ (taking into account the conditions $b(d-c)=b-a$ and $c(b-1)\ne 1-d$)
 one can check that $P_0$ is attractive. For example,
\begin{itemize}
\item[-] if $a=0.18$, $b=0.2$, $c=0.3$, $d=0.4$ then eigenvalues are approximately $-0.197$ and $0.298$.
\item[-] if $a=0$, $b=0$, $c=0.9$, $d=0.1$ then eigenvalues are $-0.8$ and $0$.
\item[-] if $a=0.81$, $b=0.9$, $c=0.7$, $d=0.8$ then eigenvalues are   $-0.743$, $0.845$.
\end{itemize}
Moreover, giving values of three parameters and plot the eigenvalues as functions of the remaining parameter,
one can see that the absolute value of the functions are less than 1. Thus based on numerical computations
we conjecture that $P_0$ is attractive for any parameters $a,b,c,d$ with conditions $b(d-c)=b-a$ and $c(b-1)\ne 1-d$.

Checking the type of $Q_1$ and $Q_2$ is also very difficult, because corresponding
eigenvalues have very long expression depending on four parameters.
Using Maple one can get the following results for $Q_2$ in case $b(d-c)<b-a$:
\begin{itemize}
\item[-] if $a=0.1$, $b=0.9$, $c=0.2$, $d=0.3$ then eigenvalues are approximately $-0.0205$, $0.7662$.
\item[-] if $a=0.7$, $b=0.9$, $c=0.2$, $d=0.3$ then eigenvalues are approximately $-0.3245$, $0.5340$.
\item[-] if $a=0.7$, $b=0.71$, $c=0.2$, $d=0$ then eigenvalues are   $-0.3949$, $0.3367$.
\end{itemize}
Thus we also have conjecture that $Q_2$ is an attracting point for any $a,b,c,d$ with  $b(d-c)<b-a$.
Similar computations can be done for $Q_1$.

\subsubsection{Dynamics for $a=c$, $b=d$.}
In this case the operator (\ref{22}) has the following form
\begin{equation}\label{20}
 T:\left\{\begin{array}{ll}
 x'=(1-y)(a+(b-a)x)\\[2mm]
 y'=(1-x)(a+(b-a)y),
 \end{array}\right.
 \end{equation}

The following lemma gives full description of fixed points.

\begin{lemma}\label{l1} The set of fixed points {\rm Fix}$(T)$ of the operator (\ref{20}) has the following form
\begin{itemize}
\item[a.] If $b=1$ then
$${\rm Fix}(T)=\left\{\begin{array}{ll}
\{(1,0)\}\cup\{(0,y), \forall y\in [0,1]\}, \ \ \mbox{if} \ \ a=0\\[2mm]
\left\{\left(x,{a(1-x)\over a(1-x)+a}\right), \forall x\in [0,1]\right\}, \ \ \mbox{if} \ \ a\ne 0.
\end{array}\right.
$$
\item[b.] If $b\ne 1$ then there is a unique fixed point, i.e.,
 $${\rm Fix}(T)=\left\{\begin{array}{ll}
 \{({a\over 1+a}, {a\over 1+a})\}, \ \ \mbox{if} \ \ a=b\\[3mm]
 \{(x_*,x_*)\}, \ \ \mbox{if} \ \ a\ne b
 \end{array}\right.
 $$where
  $$x_*={1-b+2a-\sqrt{(1-b)^2+4a}\over 2(a-b)}.$$
 \end{itemize}
\end{lemma}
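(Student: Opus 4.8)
The plan is to solve the fixed point system
\[
x=(1-y)(a+(b-a)x),\qquad y=(1-x)(a+(b-a)y)
\]
directly. The decisive first step is to subtract the two equations: writing $f(t)=a+(b-a)t$, one has $(1-y)f(x)-(1-x)f(y)=(f(x)-f(y))-yf(x)+xf(y)=(b-a)(x-y)+a(x-y)=b(x-y)$, so the difference of the two equations collapses to $(1-b)(x-y)=0$. Hence either $b=1$ or $x=y$, which is precisely the dichotomy of the statement; the rest is a case analysis organized around it, keeping in mind throughout that $a,b,x,y\in[0,1]$.

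\emph{Case $b\neq 1$.} Then $x=y$, and substituting into the first equation yields the scalar equation $x=(1-x)(a+(b-a)x)$, that is
\[
(b-a)x^2+(2a+1-b)x-a=0 .
\]
If $a=b$ this is linear, $(1+a)x=a$, with the single solution $x=\frac{a}{1+a}\in[0,1]$, so ${\rm Fix}(T)=\{(\frac{a}{1+a},\frac{a}{1+a})\}$. If $a\neq b$ it is a quadratic whose discriminant, after a short computation, equals $(1-b)^2+4a\geq0$; thus both roots are real. To identify the admissible one, set $g(x)=(b-a)x^2+(2a+1-b)x-a$ and note the clean endpoint values $g(0)=-a\leq0$ and $g(1)=1>0$. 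Combined with the sign of the leading coefficient $b-a$ (if $b>a$ the parabola opens upward and its second root is $\leq0$; if $b<a$ it opens downward and its second root exceeds $1$), this shows that exactly one root lies in $[0,1]$, that it is automatically $<1$, and that it equals $x_*=\frac{1-b+2a-\sqrt{(1-b)^2+4a}}{2(a-b)}$; hence ${\rm Fix}(T)=\{(x_*,x_*)\}$.

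\emph{Case $b=1$.} The system becomes $x=(1-y)(a+(1-a)x)$, $y=(1-x)(a+(1-a)y)$. If $a=0$ both equations reduce to $xy=0$, so ${\rm Fix}(T)$ is the union of the two coordinate segments of $S$. If $a\neq0$, rewrite the first equation as $x(a+(1-a)y)=a(1-y)$; since the factor $a+(1-a)y$ is strictly positive for $y\in[0,1]$, one can solve for one coordinate in terms of the other, and a direct manipulation shows the resulting relation is symmetric in $x$ and $y$ (equivalently, the second fixed point equation is an algebraic consequence of the first). Therefore ${\rm Fix}(T)$ is the one-parameter curve obtained by letting the free coordinate range over $[0,1]$, and a short check that the remaining coordinate then stays in $[0,1]$ finishes the case.

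The only genuinely fussy point is the endpoint-and-sign bookkeeping in the quadratic case: as $b-a$ changes sign (and as $a$ possibly vanishes) one must verify which of the two roots is admissible and that it coincides with the stated $x_*$. Everything else is routine algebra, with the identity $(1-b)(x-y)=0$ carrying the structural weight.
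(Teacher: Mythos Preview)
Your approach is essentially the same as the paper's: both proofs hinge on subtracting the two fixed-point equations to obtain $(1-b)(x-y)=0$, which forces $x=y$ when $b\ne 1$ and reduces the problem to the scalar equation $x=(1-x)(a+(b-a)x)$. The paper's proof is only a sketch and stops there; you go further by computing the discriminant $(1-b)^2+4a$ and using the endpoint values $g(0)=-a\le 0$, $g(1)=1>0$ together with the sign of the leading coefficient to single out the admissible root $x_*$, which is a welcome addition the paper omits.

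One small remark on the subcase $b=1$, $a=0$: your conclusion that $\mathrm{Fix}(T)$ is the full union $\{(x,0):x\in[0,1]\}\cup\{(0,y):y\in[0,1]\}$ is in fact correct (both equations reduce to $xy=0$), even though the lemma as printed lists only $\{(1,0)\}\cup\{(0,y)\}$; this appears to be a typo in the statement rather than a flaw in your argument, and it is consistent with Proposition~\ref{p2} of the paper.
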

\begin{proof}
Follows from detailed (but simple) analysis of the system
\begin{equation}\label{20f}
 \left\{\begin{array}{ll}
 x=(1-y)(a+(b-a)x)\\[2mm]
 y=(1-x)(a+(b-a)y).
 \end{array}\right.
 \end{equation}
For example, subtracting from the first equation of the system the second one we get $x-y=b(x-y)$, i.e., $x=y$ if $b\ne 1$.
Consequently in the case $b\ne 1$ the solutions are given by solution of $x=(1-x)(a-(b-a)x)$.
\end{proof}
\begin{lemma}\label{l2} Type of fixed points mentioned in Lemma \ref{l1} as the following
\begin{itemize}
\item[(a)] If $b=1$ the fixed points mentioned in part a. of Lemma \ref{l1} are not hyperbolic.

\item[(b)] If $b\in [0,1)$ then for fixed points mentioned in part b. we have
 $$({a\over 1+a}, {a\over 1+a})\ \ \mbox{is {\rm attractive}}$$ and
 $$(x_*,x_*)=\left\{\begin{array}{lll}
{\rm attractive}, \ \ \mbox{if} \ \ 0\leq a<1-{(1-b)^2\over 4}\\[3mm]
{\rm non hyperbolic}, \ \ \ \ \ \mbox{if} \ \  a=1-{(1-b)^2\over 4}\\[3mm]
{\rm saddle}, \ \ \mbox{if} \ \ 1-{(1-b)^2\over 4}<a\leq 1,\\[3mm]
 \end{array}\right.$$
\end{itemize}
\end{lemma}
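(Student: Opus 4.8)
The plan is to diagonalise the Jacobian $J_T$ of the operator \eqref{20} at each of the fixed points produced by Lemma \ref{l1}, taking advantage of the fact that the hypothesis $c=a$, $d=b$ makes $J_T$ very rigid. Specialising the general Jacobi matrix to this case we have
$$J_T(x,y)=\left(\begin{array}{cc}(b-a)(1-y)&-(a+(b-a)x)\\[2mm]-(a+(b-a)y)&(b-a)(1-x)\end{array}\right),$$
and I will treat the two regimes $b=1$ and $b\in[0,1)$ separately, in each case only computing eigenvalues and comparing them with $\pm 1$ (all eigenvalues occurring will be real, since in part (b) the matrix is real symmetric and in part (a) it has the real eigenvalue $1$, so the hyperbolicity definitions apply verbatim).

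For part (a), $b=1$, the key remark is that the two coordinates of $J_T(x,y)(1,1)^{\top}$ are both equal to $(1-a)(1-x-y)-a$, so $(1,1)^{\top}$ is an eigenvector with that eigenvalue; since $\operatorname{tr}J_T(x,y)=(1-a)(2-x-y)$, the other eigenvalue is $(1-a)(2-x-y)-\bigl[(1-a)(1-x-y)-a\bigr]=1$. Hence $1$ is an eigenvalue of $J_T$ at \emph{every} point of the plane, and in particular at every fixed point in part a. of Lemma \ref{l1}, whether it lies on one of the fixed-point curves or is the isolated point $(1,0)$. Therefore all these fixed points are not hyperbolic, which proves (a).

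For part (b), $b\in[0,1)$, every fixed point is symmetric, $z=(x,x)$, so $J_T(x,x)=\left(\begin{smallmatrix}p&q\\ q&p\end{smallmatrix}\right)$ with $p=(b-a)(1-x)$, $q=-(a+(b-a)x)$; its eigenvalues are $\lambda_-=p-q$ (eigenvector $(1,1)^{\top}$) and $\lambda_+=p+q$ (eigenvector $(1,-1)^{\top}$). A one-line computation gives $\lambda_-=(b-a)+a=b$, so one eigenvalue is always $b\in[0,1)$, hence of modulus $<1$. For the other, $\lambda_+=b-2a-2(b-a)x$; if $a=b$ then $x=\tfrac a{1+a}$, the $(b-a)x$ term vanishes and $\lambda_+=b-2a=-a$, of modulus $a<1$, so $\bigl(\tfrac a{1+a},\tfrac a{1+a}\bigr)$ is attractive. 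If $a\ne b$ then $x=x_*$, and substituting the explicit formula for $x_*$ (equivalently, using that $x_*$ solves \eqref{y1} in this symmetric case) in the form $2(b-a)x_*=\sqrt{(1-b)^2+4a}-(1-b+2a)$ yields, after cancellation, $\lambda_+=1-\sqrt{(1-b)^2+4a}$.

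Finally I read off the trichotomy for $(x_*,x_*)$: since $b<1$ we have $(1-b)^2+4a>0$, so $\lambda_+<1$ always; and $\lambda_+>-1\iff\sqrt{(1-b)^2+4a}<2\iff a<1-\tfrac{(1-b)^2}{4}$. Hence the point is attractive when $0\le a<1-\tfrac{(1-b)^2}{4}$ (both eigenvalues in $(-1,1)$), non-hyperbolic when $a=1-\tfrac{(1-b)^2}{4}$ ($\lambda_+=-1$), and a saddle when $1-\tfrac{(1-b)^2}{4}<a\le 1$ ($\lambda_+<-1$ while $\lambda_-=b<1$), which is exactly the stated conclusion; one checks the two descriptions of the case $a=b$ agree, since $a<1-\tfrac{(1-a)^2}{4}$ for every $a\in[0,1)$. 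The only genuinely computational step, and the one I expect to be the main obstacle, is the simplification of $\lambda_+$ at $z=(x_*,x_*)$ down to $1-\sqrt{(1-b)^2+4a}$; once that identity is established, every assertion of the lemma reduces to comparing an explicit real number with $\pm 1$.
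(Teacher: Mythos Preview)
Your proof is correct and follows essentially the same route as the paper: compute the eigenvalues of $J_T$ at each fixed point and compare with $\pm 1$, obtaining $\{1,\,\text{other}\}$ when $b=1$, $\{a,-a\}$ when $a=b<1$, and $\{b,\,1-\sqrt{(1-b)^2+4a}\}$ when $a\ne b$, exactly as in the paper's argument. One cosmetic slip: in your symmetric-matrix step the eigenvector labels are swapped (for $\left(\begin{smallmatrix}p&q\\q&p\end{smallmatrix}\right)$ the vector $(1,1)^\top$ carries $p+q$ and $(1,-1)^\top$ carries $p-q$), but since your subsequent computations of $p\pm q$ are correct this does not affect the argument.
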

\begin{proof} (a) Simple computations show that for each fixed point mentioned in part a. of Lemma \ref{l1} one of eigenvalues of the corresponding Jacobian is equal to 1 and the second eigenvalue is less than 1.

(b) For the case $a=b<1$ the eigenvalues are $-a$ and $a$, therefore $({a\over 1+a}, {a\over 1+a})$ is attractive.
For the case $a\ne b$ we have the eigenvalues $\lambda_1=b<1$ and
$\lambda_2=1-\sqrt{(1-b)^2+4a}$. It is easy to see that
$$|\lambda_2|=\left\{\begin{array}{lll}
<1, \ \ \mbox{if} \ \ 0\leq a<1-{(1-b)^2\over 4}\\[3mm]
1, \ \ \ \ \ \mbox{if} \ \  a=1-{(1-b)^2\over 4}\\[3mm]
>1, \ \ \mbox{if} \ \ 1-{(1-b)^2\over 4}<a\leq 1,\\[3mm]
 \end{array}\right.
$$
This completes the proof.
\end{proof}

{\it Limit points for} $a=b$. One very simple but interesting case is $a=b$. In this case we have
$$x^{(n+1)}=a(1-y^{(n)})=a-a^2+a^2x^{(n-1)}=a-a^2+a^3-a^3y^{(n-2)}=\dots=$$
$${a(1-(-a)^{n+1})\over 1+a}+(-a)^{n+1}x^{(0)} \ \ ({\rm or} \,  y^{(0)} \, {\rm depending \, on \, parity \, of}\, n).$$
Similarly for $y^{(n+1)}$ we obtain
$$y^{(n+1)}={a(1-(-a)^{n+1})\over 1+a}+(-a)^{n+1}y^{(0)} \ \ ({\rm or} \,  x^{(0)} \, {\rm depending \, on \, parity \, of}\, n).$$
Consequently, we have the following
$$\lim_{n\to\infty}(x^{(n)},y^{(n)})=({a\over 1+a}, {a\over 1+a}),\ \ \mbox{if} \ \ 0\leq a=b<1.$$
Moreover if $a=b=1$ then for any initial point $(x^{(0)},y^{(0)})$ its trajectory $(x^{(n)},y^{(n)})$ is a 2-periodic sequence:
$$(x^{(n)},y^{(n)})=\left\{\begin{array}{ll}
(x^{(0)},y^{(0)}),  \ \ \mbox{if} \ \ n \ \ \mbox{is even}\\[2mm]
(1-y^{(0)},1-x^{(0)}),  \ \ \mbox{if} \ \ n \ \ \mbox{is odd}\\[2mm]
\end{array}\right.
$$

{\it Limit points for} $a\ne b$. For $(x^{(0)},y^{(0)})\in [0,1]^2$, let $(x^{(n)},y^{(n)})$ be the trajectory generated by
actions of operator (\ref{20}).

The following lemma is useful
\begin{lemma} The following equalities hold
\begin{itemize}
\item[(1)] $x^{(n+1)}-y^{(n+1)}=b(x^{(n)}-y^{(n)})$ for any $n=0,1,2,\dots$.

\item[(2)] If $b=1$ then $x^{(n)}-y^{(n)}=x^{(0)}-y^{(0)}$ for any $n=1,2,\dots$.

\item[(3)] If $b\ne 1$ then
$$\lim_{n\to\infty}\left(x^{(n)}-y^{(n)}\right)=0.$$
\end{itemize}
\end{lemma}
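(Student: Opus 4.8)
The plan is to reduce all three assertions to the single algebraic identity stated in part (1); parts (2) and (3) are then immediate consequences obtained by iterating it.

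First I would establish (1) by a direct expansion of the recurrence (\ref{20}). Writing
\begin{equation*}
x^{(n+1)}-y^{(n+1)}=(1-y^{(n)})\bigl(a+(b-a)x^{(n)}\bigr)-(1-x^{(n)})\bigl(a+(b-a)y^{(n)}\bigr)
\end{equation*}
and multiplying out both products, the constant terms $a$ cancel and so does the common mixed term $(b-a)x^{(n)}y^{(n)}$, leaving $(b-a)(x^{(n)}-y^{(n)})+a(x^{(n)}-y^{(n)})=b(x^{(n)}-y^{(n)})$. This is exactly the dynamical analogue of the computation already used to prove Lemma \ref{l1}, now applied to successive iterates rather than to a fixed point.

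For (2), set $b=1$ in (1): then $x^{(n+1)}-y^{(n+1)}=x^{(n)}-y^{(n)}$, so by a trivial induction the difference is constant along the trajectory and equals $x^{(0)}-y^{(0)}$ for every $n\geq 1$ (indeed for every $n\geq 0$). For (3), iterating (1) gives $x^{(n)}-y^{(n)}=b^{n}\bigl(x^{(0)}-y^{(0)}\bigr)$ for all $n\geq 0$; under the standing hypotheses the relevant matrices are stochastic, so $b=a_{22}\in[0,1]$, and combined with $b\ne 1$ this forces $b\in[0,1)$, whence $b^{n}\to 0$ and therefore $x^{(n)}-y^{(n)}\to 0$.

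I do not anticipate any genuine obstacle: the whole lemma is a short expansion plus a geometric-sequence estimate. The only point deserving explicit mention is that $b\ne 1$ together with stochasticity yields $|b|<1$, which is what makes the factor $b^{n}$ decay to $0$ rather than merely remain bounded — without this the conclusion of (3) would fail, as part (2) shows.
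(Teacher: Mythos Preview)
Your proof is correct and follows essentially the same approach as the paper: subtract the two equations of (\ref{20}) to obtain (1), then derive (2) and (3) as immediate consequences. You have simply supplied more detail in the expansion and in justifying $b^n\to 0$, which the paper leaves implicit.
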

\begin{proof} From (\ref{20}) we have
\begin{equation}\label{21}
 \left\{\begin{array}{ll}
 x^{(n+1)}=(1-y^{(n)})(a+(b-a)x^{(n)})\\[2mm]
 y^{(n+1)}=(1-x^{(n)})(a+(b-a)y^{(n)}).
 \end{array}\right.
 \end{equation}
 Subtracting from the first equation the second one we get (1). The assertions (2) and (3) follow from (1).
\end{proof}

It is easy to see that the set $I=\{(x,y)\in [0,1]^2: x=y\}$ is invariant with respect to
operator (\ref{20}), i.e., $T(I)\subset I$. Let us study the dynamical system on $I$.

Reducing the operator $T$ on the set $I$ we get
$$x'=f(x)=(1-x)(a+(b-a)x).$$
This function has the following properties:
$f(0)=a$, $f(1)=0$ and
 if $2a\geq b$ then $f(x)$ is a decreasing function, so maximal value of $f$ is $a$.
 If $2a<b$ then $f(x)$ is increasing for $x\in [0,{b-2a\over 2(b-a)}]$ and decreasing
 for $x\in [{b-2a\over 2(b-a)},1]$, then maximal value of $f$ is ${b^2\over 4(b-a)}$, which is less than 1 for any $b>2a$.
Thus $f:[0,1]\to [0,1]$.

For any $a\ne b$ the function $f(x)$ has a unique fixed point $x_* \in [0,1]$
which is defined in Lemma \ref{l1}.
It is known that $x_*\in [0,1]$ is attractive if $|f'(x_*)|<1$,  saddle if $|f'(x_*)|=1$ and repeller if $|f'(x_*)|>1$.
Solving these inequalities we get
$$x_*=\left\{\begin{array}{lll}
{\rm attractive}, \ \ \mbox{if} \ \ 0\leq a<1-{(1-b)^2\over 4}\\[3mm]
{\rm saddle}, \ \ \ \ \ \mbox{if} \ \  a=1-{(1-b)^2\over 4}\\[3mm]
{\rm repeller}, \ \ \mbox{if} \ \ 1-{(1-b)^2\over 4}<a\leq 1,\\[3mm]
 \end{array}\right.
$$
where $b\in [0,1)$.

We will apply the following lemma (see \cite[p.70]{Kes}).

\begin{lemma}\label{mrsl1}  Let $f : [0, 1]\to [0, 1]$ be a continuous function with a fixed
point $x_*\in (0, 1)$. Assume that $f$ is differentiable at $x_*$ and that $f'(x_*) <-1$. Then there exist
$p_1$, $p_2$, $0\leq p_1 < x_* < p_2\leq 1$, such that $p_1=f(p_2)$ and $p_2=f(p_1)$.
\end{lemma}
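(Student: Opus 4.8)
The plan is to reduce the statement to the existence of a nontrivial fixed point of the second iterate $g=f\circ f$ on each side of $x_*$, located by the intermediate value theorem, and then to verify that two such points are genuinely exchanged by $f$.

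First I would extract from $f'(x_*)<-1$ the local picture at $x_*$: there is $\delta>0$ with $[x_*-\delta,x_*+\delta]\subseteq[0,1]$ such that $\frac{f(x)-x_*}{x-x_*}<-1$ whenever $0<|x-x_*|<\delta$; thus $f$ carries a punctured left neighbourhood of $x_*$ strictly past $x_*$ to the right, and a punctured right neighbourhood strictly past $x_*$ to the left. Using once more the continuity of $f$ at $x_*$ and shrinking $\delta$, it follows that $x_*$ is a repelling fixed point of $g$: the map $g$ is continuous from $[0,1]$ into $[0,1]$, $g(x_*)=x_*$, one has $g(x)<x$ for $x\in(x_*-\delta,x_*)$, and $g(x)>x$ for $x\in(x_*,x_*+\delta)$. (Equivalently, $g$ is differentiable at $x_*$ with $g'(x_*)=\bigl(f'(x_*)\bigr)^2>1$.)

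Next I would apply the intermediate value theorem to $\Phi(x)=g(x)-x$ on $[0,x_*]$. Since $\Phi(0)=f(f(0))\geq 0$ while $\Phi<0$ on a left neighbourhood of $x_*$, the zero set of $\Phi$ in $[0,x_*)$ is nonempty and compact; let $p_1$ be its largest element. Then $p_1<x_*$, $g(p_1)=p_1$, and $\Phi<0$ on $(p_1,x_*)$. Put $p_2:=f(p_1)$. From $g(p_1)=p_1$ we get at once $f(p_2)=f(f(p_1))=p_1$, so both required identities $p_1=f(p_2)$ and $p_2=f(p_1)$ already hold, and it remains only to prove $p_2>x_*$. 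The value $p_2=x_*$ is impossible, for it would force $p_1=f(p_2)=f(x_*)=x_*$, contradicting $p_1<x_*$.

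The main obstacle is thus to exclude $p_2<x_*$, i.e.\ to be sure that $f$ sends $p_1$ to the far side of $x_*$ rather than producing a $2$-periodic orbit (or an extra fixed point) lying entirely to the left of $x_*$. In the situation in which this lemma is invoked here $f$ is monotone decreasing on $[0,1]$ (in the repelling regime one has $2a>b$, and it was already noted that $2a\geq b$ makes $f$ decreasing); then the obstacle evaporates, since $p_1<x_*$ gives $p_2=f(p_1)\geq f(x_*)=x_*$, hence $p_2>x_*$ by the previous paragraph, and $\{p_1,p_2\}$ is the required pair. For a general continuous $f$ one also runs the mirror construction on the right, putting $q_2=\inf\{x\in(x_*,1]:g(x)=x\}$, which exists because $g(1)\leq 1$ and $g(x)>x$ for $x$ just to the right of $x_*$; then $f(q_2)$ is again a fixed point of $g$, and one is finished unless $f(p_1)\leq p_1<x_*<q_2\leq f(q_2)$. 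Ruling out this last configuration from the overshooting behaviour of $f$ near $x_*$ described above is the only substantial step, and it is precisely what \cite[p.70]{Kes} provides; I expect it, rather than the intermediate value bookkeeping, to carry all the weight.
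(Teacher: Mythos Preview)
The paper does not prove this lemma; it merely quotes it from Kesten \cite[p.~70]{Kes}. So there is no in-paper argument to compare against, and the question is simply whether your sketch stands on its own.

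Your reduction is correct, and in fact you are closer to a complete proof than you realise. The ``substantial step'' you defer to Kesten---ruling out $f(p_1)<x_*$---follows in two lines from the overshooting you already established. Suppose $f(p_1)<x_*$. Since $f(x)>x_*$ for all $x\in(x_*-\delta,x_*)$, the point $p_1$ cannot lie in that interval, so $p_1\le x_*-\delta$; choosing any $x_0\in(x_*-\delta,x_*)$ we have $f(p_1)<x_*<f(x_0)$, and by the intermediate value theorem there is $c\in(p_1,x_0)\subset(p_1,x_*)$ with $f(c)=x_*$. Then $g(c)=f(x_*)=x_*>c$, i.e.\ $\Phi(c)>0$, contradicting $\Phi<0$ on $(p_1,x_*)$. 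Hence $f(p_1)>x_*$, and your $p_2=f(p_1)$ already works; the mirror construction with $q_2$ is unnecessary.

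So your outline is sound; the only defect is that you stopped short of writing the last intermediate-value step, which is no harder than the ones you did write.
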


We note that the condition $f'(x_*) <-1$ of Lemma \ref{mrsl1} is satisfied iff
$1-{(1-b)^2\over 4}<a\leq 1$, i.e., when the fixed point is repeller,  then by Lemma \ref{mrsl1} it follows that
there are two 2-periodic points, denoted by $p_1$ and $p_2$, which are solutions
of the system $p_1=f(p_2), \, p_2=f(p_1)$.
Thus $p_1$ and $p_2$ are solutions of $f^2(x)=x$ which are different from the fixed point $x_*$.
Hence to find 2-periodic points one has to solve the following equation
$${f^2(x)-x\over f(x)-x}=0.$$
This equation has solutions
$$p_1={2a-b-1-\sqrt{(1-b)^2-4(1-a)}\over 2(a-b)}, \ \ p_2={2a-b-1+\sqrt{(1-b)^2-4(1-a)}\over 2(a-b)}.$$

Recall that a periodic point $p$ with of period $n$
is called an attracting periodic point if $|(f^n)'(p)|<1$.

\begin{lemma}\label{lp}
Both 2-periodic points $p_1$ and $p_2$  are attracting.
\end{lemma}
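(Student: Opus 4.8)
The plan is to compute the multiplier $(f^2)'(p_i)$ at each 2-periodic point and show it has absolute value strictly less than $1$. The key observation is the chain rule: since $f(p_1)=p_2$ and $f(p_2)=p_1$, we have $(f^2)'(p_1)=(f^2)'(p_2)=f'(p_1)f'(p_2)$, so it suffices to estimate the single product $f'(p_1)f'(p_2)$. With $f(x)=(1-x)(a+(b-a)x)=a+(b-2a)x-(b-a)x^2$ we get $f'(x)=(b-2a)-2(b-a)x$, a linear function, so $f'(p_1)f'(p_2)$ is a symmetric function of the pair $\{p_1,p_2\}$ and can be expressed through the elementary symmetric functions $p_1+p_2$ and $p_1p_2$.

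The first step is therefore to read off $p_1+p_2$ and $p_1p_2$ from the explicit formulas already derived for $p_1,p_2$ (the roots of $\frac{f^2(x)-x}{f(x)-x}=0$), or more cleanly from Vieta applied to that quadratic directly; this avoids ever touching the square root $\sqrt{(1-b)^2-4(1-a)}$. The second step is to substitute $p_i$ into $f'$: writing $f'(p_i)=(b-2a)-2(b-a)p_i$, one expands
$$f'(p_1)f'(p_2)=(b-2a)^2-2(b-a)(b-2a)(p_1+p_2)+4(b-a)^2p_1p_2,$$
and plugs in the symmetric functions from Step 1 to obtain a rational expression in $a,b$ alone. The third step is to show this expression lies in $(-1,1)$ on the relevant parameter region, namely $b\in[0,1)$ together with $1-\frac{(1-b)^2}{4}<a\le 1$ (the condition guaranteeing $p_1,p_2$ exist and are distinct from $x_*$, equivalently $f'(x_*)<-1$). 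I expect the algebra to collapse to something clean — a natural guess, given the analogous one-dimensional logistic-type calculations, is that the multiplier comes out as a simple function such as $5-(1-b)^2-4a$ or similar, whose sign and bounds are then immediate from the inequalities on $a$.

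The main obstacle is purely the bookkeeping of Step 2–3: keeping the symmetric-function substitution error-free and then pinning down the two inequalities $f'(p_1)f'(p_2)>-1$ and $f'(p_1)f'(p_2)<1$ on the half-open parameter box. The upper bound $<1$ should be the easy one (it will follow because $a\le 1$ forces the discriminant-type quantity to be small), while the lower bound $>-1$ is where the constraint $a>1-\frac{(1-b)^2}{4}$ is actually used; I would handle it by rewriting the target inequality as a polynomial inequality in the single quantity $u:=(1-b)^2-4(1-a)\in(0,\,(1-b)^2\,]\subset[0,1)$ and checking positivity of a quadratic in $u$ on that interval. A degenerate sub-case $a=b$ does not arise here since $a\ne b$ is forced by the existence of the two distinct periodic points (when $a=b$ the dynamics is the explicitly solved linear recursion with no genuine 2-cycle), so no separate treatment is needed, and one may also note the boundary case $a=1$ separately if the formulas simplify there. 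Once the closed-form multiplier is in hand, the conclusion $|(f^2)'(p_i)|<1$, hence "$p_1$ and $p_2$ are attracting," is immediate from the definition recalled just before the statement.
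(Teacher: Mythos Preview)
Your approach is essentially identical to the paper's: use the chain rule to reduce to the single product $f'(p_1)f'(p_2)$, evaluate it explicitly (the paper obtains $4-4a+2b-b^2$, which indeed equals your guessed form $5-4a-(1-b)^2$), and then check $|\cdot|<1$ on the parameter region. One small correction to your plan: the roles of the two bounds are swapped from what you predict --- the inequality $f'(p_1)f'(p_2)<1$ is exactly equivalent to $a>1-\tfrac{(1-b)^2}{4}$, while $f'(p_1)f'(p_2)>-1$ is the easy one, following from $a\le 1$ and $(1-b)^2\le 1$ (in fact your substitution gives $f'(p_1)f'(p_2)=1-u\ge 0$).
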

\begin{proof} Since $(f^2)'(x)=f'(f(x))f'(x)$ we have
 $$(f^2)'(p_1)=f'(f(p_1))f'(p_1)=f'(p_2)f'(p_1)=(f^2)'(p_2)=4-4a+2b-b^2.$$
 Using condition $1-{(1-b)^2\over 4}<a\leq 1$, $b\ne 1$ one can check that
 $|4-4a+2b-b^2|<1$.
\end{proof}

Summarizing we get
\begin{thm}\label{t2} For an initial point $(x^{(0)},y^{(0)})\in [0,1]^2$ its trajectory $(x^{(n)},y^{(n)})$
 generated by the operator (\ref{20}) has the following properties
\begin{itemize}
\item[1.]  $$\lim_{n\to\infty}(x^{(n)},y^{(n)})=({a\over 1+a}, {a\over 1+a}),\ \ \mbox{if} \ \ 0\leq a=b<1.$$
\item[2.]  If $a=b=1$ then
$$(x^{(n)},y^{(n)})=\left\{\begin{array}{ll}
(x^{(0)},y^{(0)}),  \ \ \mbox{if} \ \ n \ \ \mbox{is even}\\[2mm]
(1-y^{(0)},1-x^{(0)}),  \ \ \mbox{if} \ \ n \ \ \mbox{is odd}\\[2mm]
\end{array}\right.$$
\item[3.] Let $b\ne 1$ and $0\leq a<1-{(1-b)^2\over 4}$. Then there is an open neighborhood $U$ of $(x_*,x_*)$ such
that if $(x^{(0)},y^{(0)}) \in U$, then
$$\lim_{n\to\infty}(x^{(n)},y^{(n)})=(x_*,x_*).$$

\item[4.] Let $b\ne 1$ and  $1-{(1-b)^2\over 4}<a\leq 1$. Then
there is an open neighborhood $V$ of $(x_*,x_*)$ such that, if $(x^{(0)},y^{(0)})\in V\setminus\{(x_*,x_*)\}$, with $x^{(0)}=y^{(0)}$ then there exists
$k>0$ such that $(x^{(k)},y^{(k)})\notin V$. Moreover, there is a curve $\gamma\subset [0,1]^2$  through $(x_*,x_*)$ which is
invariant with respect to $T$ and for any initial point $(x^{(0)},y^{(0)})\in \gamma$  all trajectories tend to $(x_*,x_*)$.

\item[5.] If $1-{(1-b)^2\over 4}<a\leq 1$ and $x^{(0)}=y^{(0)}<x_*$ (resp.  $x^{(0)}=y^{(0)}>x_*$) then there
is an open neighborhood $W\subset [0,1]$ of the 2-periodic orbit $\{p_1,p_2\}\subset [0,1]$ such
that if $x^{(0)} \in W$, then
$$\lim_{n\to\infty}(x^{(n)},y^{(n)})=\left\{\begin{array}{ll}
(p_1,p_1), \ \ \mbox{if} \ \ n=2m, \, m\to\infty\\[2mm]
(p_2,p_2), \ \ \mbox{if} \ \ n=2m+1, \, m\to\infty
\end{array}\right.$$
$$\left({\rm resp}. \ \ \lim_{n\to\infty}(x^{(n)},y^{(n)})=\left\{\begin{array}{ll}
(p_2,p_2), \ \ \mbox{if} \ \ n=2m, \, m\to\infty\\[2mm]
(p_1,p_1), \ \ \mbox{if} \ \ n=2m+1, \, m\to\infty
\end{array}\right.\right).$$
\end{itemize}
\end{thm}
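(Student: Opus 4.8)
The plan is to treat each of the five cases separately, combining the explicit iteration formulas already obtained in the subsection with the local-stability information from Lemmas \ref{l1}--\ref{lp}. Items 1 and 2 require essentially nothing new: they are exactly the two displayed computations for $a=b<1$ and $a=b=1$ carried out just before the statement, where iterating $x^{(n+1)}=a(1-y^{(n)})$ twice yields the closed form $x^{(n+1)}=\frac{a(1-(-a)^{n+1})}{1+a}+(-a)^{n+1}x^{(0)}$ (with $x^{(0)}$ replaced by $y^{(0)}$ according to parity), and letting $n\to\infty$ gives the limit when $|a|<1$ and the $2$-periodicity when $a=1$. So for these two items I would simply invoke those formulas.

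For items 3 and 4 the strategy is to pass to the invariant diagonal $I=\{x=y\}$ and to the complementary direction. By the last lemma of the subsection, $x^{(n)}-y^{(n)}=b^n(x^{(0)}-y^{(0)})\to 0$ when $b\ne 1$, so every trajectory is asymptotically attracted to $I$; on $I$ the dynamics is governed by the one-dimensional map $f(x)=(1-x)(a+(b-a)x)$ with fixed point $x_*$. For item 3, when $0\le a<1-\tfrac{(1-b)^2}{4}$ Lemma \ref{l2}(b) says $(x_*,x_*)$ is attractive (both eigenvalues of $J_T$ strictly inside the unit disc), so the standard stable-manifold/linearization theorem for hyperbolic attracting fixed points gives an open neighbourhood $U$ with the stated convergence. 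For item 4, in the range $1-\tfrac{(1-b)^2}{4}<a\le 1$ the eigenvalues are $\lambda_1=b\in[0,1)$ and $\lambda_2=1-\sqrt{(1-b)^2+4a}<-1$, a saddle: the eigendirection for $\lambda_1$ is tangent to $I$ (since $I$ is invariant) and the eigendirection for $\lambda_2$ is transverse. The invariant curve $\gamma$ through $(x_*,x_*)$ on which trajectories converge is the local stable manifold associated to $\lambda_1$; the escape statement along $I$ follows because on $I$ the map $f$ has $|f'(x_*)|>1$, so $x_*$ is a repeller for $f$ and points of $I$ near $x_*$ (other than $x_*$ itself) leave any small neighbourhood in finitely many steps.

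For item 5 the key point is again one-dimensional: by Lemma \ref{mrsl1} applied to $f$ (whose hypothesis $f'(x_*)<-1$ holds precisely in this parameter range), there are two genuine $2$-periodic points $p_1<x_*<p_2$ with $p_1=f(p_2)$, $p_2=f(p_1)$, whose explicit values are the solutions of $\frac{f^2(x)-x}{f(x)-x}=0$ written out before the statement. Lemma \ref{lp} gives $|(f^2)'(p_i)|=|4-4a+2b-b^2|<1$, so each $p_i$ is an attracting fixed point of $f^2$; hence there is an interval $W\ni x^{(0)}$ (contained in whichever side of $x_*$ the starting point lies, since $f$ maps $[0,x_*)$ and $(x_*,1]$ in an orientation-reversing way interchanging the two sides) such that $f^{2m}(x^{(0)})\to p_1$ and $f^{2m+1}(x^{(0)})\to p_2$ — or the roles of $p_1,p_2$ swapped, depending on which side of $x_*$ one starts. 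Transporting this back to the plane: since $x^{(0)}=y^{(0)}$ the whole trajectory stays on $I$, so $(x^{(n)},y^{(n)})=(f^{(n)}(x^{(0)}),f^{(n)}(x^{(0)}))$ and the claimed even/odd limits follow immediately.

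The main obstacle is item 4, specifically producing the invariant curve $\gamma$ and justifying that it is exactly the stable manifold of the saddle while simultaneously nailing down the escape behaviour strictly along $I$: one must be careful that $\gamma$ is tangent at $(x_*,x_*)$ to the $\lambda_1$-eigendirection (not to $I$, which is the $\lambda_2$-eigendirection of the repelling mode), and that the two invariant curves through the saddle — $I$ itself and $\gamma$ — play opposite roles. Everything else reduces either to the explicit formulas already displayed or to textbook facts about hyperbolic fixed and periodic points of one- and two-dimensional maps, which may be cited from \cite{D} and \cite{Kes}.
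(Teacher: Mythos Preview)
Your proposal is essentially the paper's own proof, which is a single sentence invoking the preceding explicit computations and Lemmas \ref{l1}--\ref{lp} together with Theorems~6.3 and~6.5 of \cite[p.~216]{D} (local attraction at hyperbolic sinks and the stable/unstable manifold theorem for saddles). One slip to clean up in item~4: you first assert that the $\lambda_1=b$ eigendirection is tangent to the diagonal $I$, but the tangent direction $(1,1)$ to $I$ carries eigenvalue $f'(x_*)=\lambda_2$ (the repelling one), exactly as you yourself say in your final paragraph; the paper's Remark immediately after the theorem confirms that $I$ is the local \emph{unstable} manifold and $\gamma$ the local \emph{stable} manifold, so just make the middle paragraph consistent with that.
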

\begin{proof} The proof follows from above mentioned results by applying Theorem 6.3 and Theorem 6.5 of \cite[p.216]{D}.
\end{proof}

\begin{rk} In part 4 of Theorem \ref{t2} the set $I$ (resp. $\gamma$) is the local unstable (resp. stable) manifold at $(x_*,x_*)$ (see \cite{D}).
   We note that the part 2 of this theorem gives a continuum set of 2-periodic orbits: any point of $[0,1]$ generates a 2-periodic orbit. But in part 5 we have only one 2-periodic orbit $\{(p_1,p_1),(p_2,p_2)\}$  of the operator (\ref{20}). This periodic orbit attracts other non-periodic trajectories.
\end{rk}

\section{Constrained algebras}

Let us first give some properties of the general algebra of bisexual population $\mathcal B$.

Extend the operator (\ref{3}) on $\R^{n+\nu}$, i.e. consider the
operator $V \colon \R^{n+\nu}\to \R^{n+\nu}$, $z=(x,y)\mapsto
V(z)=z'=(x',y')$ defined as
\begin{equation}\label{d1}
x'_j= \sum_{i,k=1}^{n,\nu}P_{ik,j}^{(f)}x_iy_k; \ \ y'_l=
\sum_{i,k=1}^{n,\nu} P_{ik,l}^{(m)}x_iy_k \, .
\end{equation}
Consider the following linear form $X \colon \R^n\to \R$ ( $Y \colon \R^\nu\to
\R$) defined by
\begin{equation}\label{d2}
 X(x)=\sum_{i=1}^nx_i, \ \ \left(Y(y)=\sum_{k=1}^\nu y_k\right) \, .
\end{equation}
Denote
\begin{equation}\label{d3}
 H_i=\{z=(x,y): X(x)=Y(y)=i\}, \ \ i=0,1 \, ,
\end{equation}
\begin{equation}\label{d33}
 Z_0=\{z=(x,y): X(x)Y(y)=0\}.
\end{equation}
the product of the $i$-hyperplanes in $\R^n$ and $\R^\nu$,
respectively, and the set of absolute nilpotent elements.

A subalgebra of an algebra is a vector subspace which is closed under the multiplication of vectors.
\begin{pro}\label{dp1}
\begin{itemize}
  \item[(1)] The sets $H_0$, $H_1$ and $S=S^{n-1}\times S^{\nu-1}$ are closed under the multiplication of the algebra $\mathcal B$.
  \item[(2)] There is one-to-one correspondence between one-dimensional subalgebras of $\mathcal B$ and the sets of non-zero fixed points of operator $V$ together with absolute nilpotent elements.
\end{itemize}
\end{pro}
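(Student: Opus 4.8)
The plan is to verify both parts by direct computation with the multiplication formula (\ref{5}), together with the normalization conditions (\ref{2}) satisfied by the inheritance coefficients. For part (1), let $z=(x,y)$ and $t=(u,v)$ be two elements of $\mathcal B$. From (\ref{5}), the "female part" of the product $zt$ is $\frac12\sum_k\big(\sum_{i,j}P^{(f)}_{ij,k}(x_iv_j+u_iy_j)\big)e^{(f)}_k$, so applying the linear form $X$ and using $\sum_k P^{(f)}_{ij,k}=1$ we obtain $X(zt)=\frac12\sum_{i,j}(x_iv_j+u_iy_j)=\frac12\big(X(x)Y(v)+X(u)Y(y)\big)$; the identical computation with $Y$ and $\sum_l P^{(m)}_{ij,l}=1$ gives $Y(zt)=\frac12\big(X(x)Y(v)+X(u)Y(y)\big)$, so that $X(zt)=Y(zt)$ for all $z,t$. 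Consequently, if $z,t\in H_i$ then $X(zt)=Y(zt)=\frac12(i\cdot i+i\cdot i)=i^2$, which equals $0$ when $i=0$ and $1$ when $i=1$; hence $H_0$ and $H_1$ are each closed under multiplication. For $S=S^{n-1}\times S^{\nu-1}$ one additionally notes that when $z,t\in S$ all the numbers $x_i,y_j,u_i,v_j$ are nonnegative and each $P^{(f)}_{ij,k},P^{(m)}_{ij,l}\geq 0$, so every coordinate of $zt$ is nonnegative; combined with $X(zt)=Y(zt)=1$ (the case $i=1$ above) this shows $zt\in S$, i.e. $S$ is closed under multiplication. (Equivalently, $zt\in S$ is just the statement that (\ref{3}) maps $S$ to itself, already observed after (\ref{3}).)

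For part (2), the key point is that a one-dimensional subspace $L=\{\lambda w:\lambda\in\R\}$ spanned by a nonzero vector $w$ is a subalgebra precisely when $w^2\in L$, i.e. when $w^2=\mu w$ for some scalar $\mu\in\R$; and since $\R$ is a field with $w\neq 0$, this scalar is uniquely determined by $w$. So the plan is to analyze the equation $w^2=\mu w$. If $\mu=0$ this says exactly that $w$ is an absolute nilpotent element ($w^2=0$). If $\mu\neq 0$, then rescaling $z:=\mu^{-1}w$ gives a nonzero vector on the same line with $z^2=\mu^{-2}w^2=\mu^{-2}\mu w=\mu^{-1}w=z$, i.e. $z$ is a nonzero idempotent, which by the algebraic interpretation recorded after (\ref{5}) is the same as a nonzero fixed point of $V$. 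Conversely, any nonzero fixed point $z$ (so $z^2=z$) spans a one-dimensional subalgebra, and any nonzero absolute nilpotent $w$ (so $w^2=0\in L$) does too. It remains only to check that this correspondence is well-defined and injective as a map from lines to the indicated set, i.e. that two different one-dimensional subalgebras cannot produce the same distinguished element. This is where a little care is needed: one must observe that a nonzero idempotent $z$ is the unique nonzero idempotent on its line (if $(\lambda z)^2=\lambda z$ with $\lambda\neq 0$ then $\lambda^2 z=\lambda z$, forcing $\lambda=1$, using $z\neq 0$), and likewise for an absolute nilpotent the whole line consists of absolute nilpotent elements but the line is recovered as $\R w$ from any one of its nonzero members; and that an idempotent is never absolute nilpotent, so the two cases do not overlap on a given line.

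The main obstacle — really the only nontrivial bookkeeping — is making the bijection in part (2) precise: one must decide the target set carefully (nonzero fixed points, each counted once, together with one-dimensional subspaces all of whose elements are absolute nilpotent, or equivalently absolute nilpotent "directions"), and then check surjectivity and injectivity using the scaling argument above, namely that $(\lambda z)^2=\lambda^2 z^2$ so the equation $w^2\in\R w$ transforms cleanly under rescaling. Once the dichotomy "$\mu=0$ versus $\mu\neq 0$" is set up, everything reduces to the identity $(\lambda w)^2=\lambda^2 w^2$ and the fact that a nonzero scalar equation $\lambda^2=\lambda$ in a field has the unique nonzero solution $\lambda=1$; no further structure of $\mathcal B$ is needed for part (2).
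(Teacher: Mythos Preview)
Your proof is correct and follows essentially the same approach as the paper. For part (1) you compute $X(zt)$ and $Y(zt)$ via (\ref{5}) and the normalization (\ref{2}) exactly as the paper does, only spelling out the cases $H_0$ and $S$ that the paper leaves as ``similar''; for part (2) you run the same dichotomy $z^2=\mu z$ with $\mu=0$ versus $\mu\neq 0$ and the same rescaling $z\mapsto \mu^{-1}z$, but you add the useful observation (which the paper omits) that a nonzero idempotent is the \emph{unique} nonzero idempotent on its line, whereas a nilpotent line consists entirely of absolute nilpotents, so that the ``one-to-one correspondence'' is really between one-dimensional subalgebras and the set of nonzero fixed points together with nilpotent \emph{directions} rather than individual nilpotent elements.
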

\proof
(1) We give the proof for $H_1$ for other sets it is similar. Take any two point $z=(x,y),t=(u,v)\in H_1$ then by (\ref{5}) we obtain
$$
 X(zt) = \frac{1}{2} \sum_{k=1}^n\left(\sum_{i=1}^n\sum_{j=1}^\nu
P_{ij,k}^{(f)}(x_iv_j+u_iy_j)\right)= \frac{1}{2} \left(\sum_{i=1}^n\sum_{j=1}^\nu\left[
\sum_{k=1}^nP_{ij,k}^{(f)}\right](x_iv_j+u_iy_j)\right)$$
$$={1\over 2}(X(z)Y(t)+Y(z)X(t))=1.$$
Similarly we get $Y(zt)=1$, i.e. $zt\in H_1$.

(2) Let $\mathcal L_z=\R z$ be an one-dimensional subalgebra of $\mathcal B$ generated by $0\ne z\in \mathcal B.$ Consider $z^2=\lambda z$.

If $\lambda \neq 0$, then the element $p=\frac{z}{\lambda}$ is a fixed point of $V$ (since $p^2=p$).

If $\lambda =0$, then $z$ is an absolute nilpotent element of the algebra $\mathcal B$.

Now assume $z\ne 0$ is a fixed point of $V$, that is, $z^2=z$. Consider the space $\mathcal L_z=\R  z$, then for any $x,y\in \mathcal L_z$ we have
$$xy=(\lambda_1z)(\lambda_2z)=\lambda_1\lambda_2 z^2=\lambda_1\lambda_2z\in \mathcal L_z.$$
The same is true for an absolute nilpotent element.
This completes the proof.
\endproof

Let us study properties of the evolution algebra $\mathcal B_1$ with
constrains mentioned in the previous section.
\subsection{A hard constrained algebra.} Under hard constrain (\ref{v7}) the algebra
$\mathcal B_1$ has the following multiplication table
\begin{equation}\label{a1}
\begin{array}{llll}
e^{(f)}_ie^{(m)}_1 = e^{(m)}_1e^{(f)}_i=\frac{1}{2}
\left(e^{(f)}_i+e^{(m)}_1\right), \ \ i=1,\dots,n\\[3mm]
e^{(f)}_1e^{(m)}_k = e^{(m)}_ke^{(f)}_1=\frac{1}{2}
\left(e^{(f)}_1+e^{(m)}_k\right), \ \ k=1,\dots,\nu\\[3mm]
e^{(f)}_ie^{(m)}_k=e^{(m)}_ke^{(f)}_i=\frac{1}{2}
\left(e^{(f)}_1+e^{(m)}_1\right) \ \ \mbox{if} \ \ i\ne 1, k\ne 1\\[3mm]
e^{(f)}_ie^{(f)}_j=e^{(m)}_ke^{(m)}_l=0.
\end{array}
\end{equation}

\begin{pro} We have the following relations:
$$((e^{(m)}_1e^{(f)}_{i_1})e^{(f)}_{i_2})\dots e^{(f)}_{i_k}=\frac{1}{2^k}(e^{(m)}_1+e^{(f)}_{i_k}),$$
$$((e^{(m)}_1e^{(f)}_{1})e^{(m)}_{i_1})\dots e^{(m)}_{i_k}=\frac{1}{2^{k+1}}(e^{(f)}_1+e^{(m)}_{i_k}),$$

For $s\neq 1$ we have
$$((e^{(m)}_1e^{(f)}_{s})e^{(m)}_{i_1})\dots e^{(m)}_{i_k}=\frac{1}{2^{k+1}}(e^{(f)}_1+e^{(m)}_{i_k}), \ \ \mbox{if} \ \ \exists i_p\neq 1,\ \ p\in \{1,\dots,k\},$$
$$((e^{(m)}_1e^{(f)}_{s})e^{(m)}_{1})\dots e^{(m)}_{1}=\frac{1}{2^{k+1}}(e^{(f)}_s+e^{(m)}_{1}).$$
\end{pro}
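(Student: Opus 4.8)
The plan is to verify each of the four identities by induction on $k$, reading the products directly off the multiplication table (\ref{a1}). The single mechanism behind all of them is the following: if we multiply an element of the form $\alpha e^{(f)}_p+\beta e^{(m)}_q$ on the right by one basis vector, then one of the two summands is annihilated (since $e^{(f)}_ie^{(f)}_j=0$ and $e^{(m)}_ke^{(m)}_l=0$) and the surviving cross term expands by one of the three nonzero rules of (\ref{a1}), each of which produces a factor $\tfrac12$. Hence every appended factor multiplies the scalar by $\tfrac12$, and the only thing to track through the induction is which female index and which male index survive.

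For the first identity I would start from $e^{(m)}_1e^{(f)}_{i_1}=\tfrac12(e^{(f)}_{i_1}+e^{(m)}_1)$ (first row of (\ref{a1})); assuming the claim for $k$, multiply the right-hand side $\tfrac{1}{2^{k}}(e^{(m)}_1+e^{(f)}_{i_{k}})$ by $e^{(f)}_{i_{k+1}}$: the term $e^{(f)}_{i_k}e^{(f)}_{i_{k+1}}$ vanishes while $e^{(m)}_1e^{(f)}_{i_{k+1}}=\tfrac12(e^{(f)}_{i_{k+1}}+e^{(m)}_1)$, which yields $\tfrac{1}{2^{k+1}}(e^{(m)}_1+e^{(f)}_{i_{k+1}})$. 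The second identity is the analogous computation with the female slot frozen at index $1$: from $e^{(m)}_1e^{(f)}_1=\tfrac12(e^{(f)}_1+e^{(m)}_1)$, each further factor $e^{(m)}_{i_p}$ kills the male summand and, by the second row of (\ref{a1}) (valid for \emph{every} male index), $e^{(f)}_1e^{(m)}_{i_p}=\tfrac12(e^{(f)}_1+e^{(m)}_{i_p})$, so only the male index updates. The fourth identity, in which $s\neq1$ and all appended male factors equal $e^{(m)}_1$, is once more the same computation using $e^{(f)}_se^{(m)}_1=\tfrac12(e^{(f)}_s+e^{(m)}_1)$ (first row, valid for every female index): neither index changes and one only accumulates powers of $\tfrac12$.

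The third identity is the one that needs a genuine case split, and handling it cleanly is the main point of care. Starting again from $e^{(m)}_1e^{(f)}_s=\tfrac12(e^{(f)}_s+e^{(m)}_1)$ with $s\neq1$: as long as the appended male factors equal $e^{(m)}_1$ we stay, by the fourth-identity computation, at $\tfrac{1}{2^{p+1}}(e^{(f)}_s+e^{(m)}_1)$; the first time some $i_q\neq1$ appears we must invoke the \emph{third} row of (\ref{a1}), namely $e^{(f)}_se^{(m)}_{i_q}=\tfrac12(e^{(f)}_1+e^{(m)}_1)$ (legitimate because $s\neq1$ and $i_q\neq1$), which flips the female slot to $1$; from then on the pattern of the second identity applies and the male index tracks $i_p$, so after all $k$ factors we reach $\tfrac{1}{2^{k+1}}(e^{(f)}_1+e^{(m)}_{i_k})$. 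I would organize the induction as: split on whether $i_k=1$, and in the branch $i_k=1$ split further on whether some earlier $i_p\neq1$, each branch reducing to one of the already-proved identities. The one subtlety I would flag in a remark is that the displayed right-hand side is literally correct precisely when a non-trivial index occurs \emph{strictly before} the last factor (if the only non-trivial index is $i_k$ itself, the flip step leaves the male slot at $1$); apart from this bookkeeping point everything is a direct substitution into (\ref{a1}), so there is no real obstacle.
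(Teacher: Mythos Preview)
Your approach is correct and is exactly the intended one: the paper's own proof consists of the single word ``Straightforward'', and your induction on $k$, reading each step off the table (\ref{a1}), is precisely the straightforward computation being alluded to. Your observation about the third identity---that when the \emph{only} nontrivial index is $i_k$ itself the flip step leaves the male slot at $1$ rather than $i_k$---is a genuine bookkeeping point the paper does not comment on; it does not affect the use made of the proposition (non-nilpotency), but it is worth flagging.
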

\begin{proof} Straightforward.
\end{proof}

From this proposition we conclude that the algebra $\mathcal B_1$ is not nilpotent. Using products (\ref{a1}) we derive that $dim \mathcal B_1^2=n+\nu-1$ and $(\mathcal B_1^2)(\mathcal B_1^2)=\mathcal B_1^2$. Hence, the algebra $\mathcal B_1$ is not also solvable.

Now we shall investigate one-dimensional ideals of the algebra $\mathcal B_1$. From Proposition \ref{dp1} we conclude that if $I=\langle z=(x,y)\rangle$ is an ideal of the algebra $\mathcal B_1$, then $z\in H_0\cup H_1 \cup Z_0$.

We have

\begin{equation}\label{id1} ze_1^{(m)}=\frac{1}{2}\sum_{i=1}^nx_i(e^{(f)}_i+e^{(m)}_1)=
\frac{1}{2}\sum_{i=1}^nx_ie^{(f)}_i+\frac{1}{2}X(z)e^{(m)}_1,
\end{equation}
\begin{equation}\label{id2}
ze_1^{(f)}=\frac{1}{2}Y(z)e^{(f)}_1+ \frac{1}{2}\sum_{j=1}^\nu y_je^{(m)}_j.
\end{equation}
\begin{equation}\label{id3}
ze_k^{(m)}=\frac{1}{2}X(z)e_1^{(f)}+\frac{1}{2}x_1e_k^{(m)}+\frac{1}{2}(X(z)-x_1)e^{(m)}_1, \ \ k\geq 2,
\end{equation}
\begin{equation}\label{id4}
ze_k^{(f)}=\frac{1}{2}Y(z)e_1^{(m)}+\frac{1}{2}y_1e_k^{(f)}+\frac{1}{2}(Y(z)-y_1)e_1^{(f)}, \ \ k\geq 2.
\end{equation}

We consider the following three cases.

{\bf Case 1.} Let $z=(x,y)\in H_0.$ Then from equalities (\ref{id1}), (\ref{id2}) and a property of ideal we conclude that
$$ze_1^{(m)}=\frac{1}{2}\sum_{i=1}^nx_ie^{(f)}_i\in\langle z\rangle \Rightarrow z=x$$
$$ze_1^{(f)}=\frac{1}{2}\sum_{j=1}^\nu y_je^{(m)}_j\in\langle z\rangle \Rightarrow z=y.
$$
Therefore, $z=0$.

{\bf Case 2.} Let $z=(x,y)\in H_1.$ Then from equalities (\ref{id1}), (\ref{id2}) and the property of ideal we conclude that
$$ze_1^{(m)}=\frac{1}{2}\sum_{i=1}^nx_ie^{(f)}_i+\frac{1}{2}e^{(m)}_1\in\langle z\rangle \Rightarrow z=x+y_1e^{(m)}_1,$$
$$ze_1^{(f)}=\frac{1}{2}e^{(f)}_1+ \frac{1}{2}\sum_{j=1}^\nu y_je^{(m)}_j\in\langle z\rangle \Rightarrow z=x_1e^{(f)}_1+y.
$$

Consequently, $z=e^{(f)}_1+e^{(m)}_1.$

{\bf Case 3.} Let $z=(x,y)\in Z_0.$

{\bf Subcase 3.1.} Let $X(z)=0$ and $Y(z)\neq 0$. Then from the equality (\ref{id1}) we conclude
that
$$ze_1^{(m)}=\frac{1}{2}\sum_{i=1}^nx_ie^{(f)}_i\in\langle z\rangle \Rightarrow z=x.$$

Thus we have obtained that $z=x$, but this is a contradiction with condition $Y(y)\ne 0.$

{\bf Subcase 3.2.} Let $X(z)\neq0$ and $Y(z)=0$. Then from the equality (\ref{id2}) we get that
$$ze_1^{(f)}=\frac{1}{2}\sum_{j=1}^\nu y_je^{(m)}_j \in\langle z\rangle \Rightarrow x_i=0, \ 1\leq i \leq n,$$
which is a contradiction with condition $X(z)\neq0$.

Thus we have proved the following

\begin{pro} The ideal $\langle e^{(f)}_1+e^{(m)}_1\rangle$ is unique (up to isomorphism) one-dimensional ideal of the algebra $\mathcal B_1.$
\end{pro}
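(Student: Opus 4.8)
The plan is to follow the usual route for pinning down low‑dimensional ideals: first use the structural constraints coming from Proposition \ref{dp1} to cut the list of possible generators down to a short one, and then test that list against the multiplication table (\ref{a1}) by means of the product formulas (\ref{id1})--(\ref{id4}).

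First I would note that a one‑dimensional ideal $I=\langle z\rangle$ is in particular a one‑dimensional subalgebra, so Proposition \ref{dp1}(2) applies and, after rescaling, $z=(x,y)$ is either a non‑zero fixed point of $V$ or an absolute nilpotent element of $\mathcal B_1$. Since any fixed point satisfies $X(z)=Y(z)=X(z)Y(z)$ (so that $X(z)=Y(z)\in\{0,1\}$) and any absolute nilpotent satisfies $X(z)Y(z)=0$, in all cases $z\in H_0\cup H_1\cup Z_0$; this is exactly where the sets defined in (\ref{d3})--(\ref{d33}) come in.

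Next I would split into three cases according to which of these sets contains $z$, in each case imposing that the products $ze^{(m)}_1$, $ze^{(f)}_1$ (and, when needed, $ze^{(m)}_k$, $ze^{(f)}_k$ for $k\ge 2$) read off from (\ref{id1})--(\ref{id4}) lie in $\mathbb R z$. If $z\in H_0$, then $X(z)=Y(z)=0$, so (\ref{id1}) forces $ze^{(m)}_1=\frac12\sum_{i=1}^n x_ie^{(f)}_i$ to be proportional to $z$, which is possible only if the male part of $z$ vanishes, i.e. $z=x$; symmetrically (\ref{id2}) gives $z=y$; hence $z=0$, a contradiction. If $z\in H_1$, then $X(z)=Y(z)=1$, and proportionality of (\ref{id1}) and (\ref{id2}) to $z$ forces the female part of $z$ to be concentrated at $e^{(f)}_1$ and its male part at $e^{(m)}_1$, after which the normalizations $X(z)=Y(z)=1$ pin down $z=e^{(f)}_1+e^{(m)}_1$. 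If $z\in Z_0$ but $z\notin H_0\cup H_1$, then exactly one of $X(z),Y(z)$ vanishes; say $X(z)=0$ and $Y(z)\ne 0$ — then (\ref{id1}) once more forces $z=x$, contradicting $Y(z)\ne 0$, and the case $Y(z)=0$, $X(z)\ne 0$ is symmetric via (\ref{id2}). This leaves $z=e^{(f)}_1+e^{(m)}_1$ as the only candidate.

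It then remains to confirm that $\langle e^{(f)}_1+e^{(m)}_1\rangle$ is indeed an ideal, by checking from (\ref{id1})--(\ref{id4}) — with $X(z)=Y(z)=x_1=y_1=1$ and all remaining coordinates zero — that every product $ze$ with $e$ a basis element is a scalar multiple of $z$; together with the case analysis this gives uniqueness (in fact literal uniqueness as a subspace, which is stronger than the stated uniqueness up to isomorphism). I expect the main effort to be organizational rather than substantive: keeping the $Z_0$ sub‑cases and the proportionality conditions straight, and being careful that a vector of the form $\sum_i x_ie^{(f)}_i$ is a multiple of $z=(x,y)$ exactly when its male part vanishes. No analytic input is needed; the whole argument is a finite verification over the three sets $H_0$, $H_1$, $Z_0$ supplied by Proposition \ref{dp1}.
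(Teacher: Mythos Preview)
Your plan mirrors the paper's proof almost exactly: reduce to $z\in H_0\cup H_1\cup Z_0$ via Proposition~\ref{dp1} and run the same three-case check against (\ref{id1})--(\ref{id4}). The $H_1$ and $Z_0\setminus H_0$ cases go through, but the $H_0$ case has a real gap --- the same one present in the paper's own argument. You claim that $ze^{(m)}_1=\tfrac12\sum_i x_ie^{(f)}_i\in\langle z\rangle$ forces the male part of $z$ to vanish; this fails when $x=0$, since then $ze^{(m)}_1=0$ imposes no constraint on $y$. (Your closing remark that a pure-female vector is a multiple of $z=(x,y)$ ``exactly when its male part vanishes'' has the same flaw: it is also a multiple of $z$ when $x=0$.) The symmetric use of (\ref{id2}) fails the same way when $y=0$, so elements $z\in H_0$ with one of $x,y$ identically zero are not excluded by your analysis.

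This gap is fatal rather than cosmetic. For $\nu\ge 3$ take $z=e^{(m)}_2-e^{(m)}_3\in H_0$: from (\ref{a1}) one checks $z\cdot e^{(f)}_1=\tfrac12 z$, $z\cdot e^{(f)}_i=0$ for $i\ge 2$, and $z\cdot e^{(m)}_k=0$ for all $k$, so $\langle z\rangle$ \emph{is} a one-dimensional ideal of $\mathcal B_1$. Since $z^2=0$ while $(e^{(f)}_1+e^{(m)}_1)^2=e^{(f)}_1+e^{(m)}_1$, these two ideals are not isomorphic even as abstract one-dimensional algebras. (By symmetry $e^{(f)}_2-e^{(f)}_3$ works when $n\ge 3$.) Hence the proposition, and both proofs, stand only when $n=\nu=2$, where no nonzero $z\in H_0$ of this form exists.
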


Therefore, the algebra $\mathcal B_1$ is not simple (in sense that it has non-trivial ideal).

\begin{hyp} \label{hyp} In the algebra $\mathcal B_1$ there is not ideals except one-dimensional $\langle e^{(f)}_1+e^{(m)}_1\rangle.$
\end{hyp}

Below we present several cases when the hypothesis is true.

For $s\geq 2$ let $I=\langle z_1, z_2, \dots, z_s\rangle$ be $s$-dimensional ideal of the algebra $\mathcal B_1$, where $z_t=\displaystyle\sum_{i=1}^{n}x_{i,t}e_i^{(f)}+ \sum_{j=1}^{\nu}y_{j,t}e_j^{(m)}$ with $1\leq t \leq s$.

From the equalities (\ref{id1}) - (\ref{id4}) we have
$$z_te_1^{(m)}=\frac{1}{2}\sum_{i=1}^nx_{i,t}e^{(f)}_i+\frac{1}{2}X(z_t)e^{(m)}_1\in I,$$
$$z_te_1^{(f)}=\frac{1}{2}Y(z_t)e^{(f)}_1+ \frac{1}{2}\sum_{j=1}^\nu y_{j,t}e^{(m)}_j\in I,$$
$$z_te_k^{(m)}=\frac{1}{2}X(z_t)e_1^{(f)}+\frac{1}{2}x_{1,t}e_k^{(m)}+\frac{1}{2}(X(z_t)-x_{1,t})e^{(m)}_1\in I,$$
$$z_te_k^{(f)}=\frac{1}{2}Y(z_t)e_1^{(m)}+\frac{1}{2}y_{1,t}e_k^{(f)}+\frac{1}{2}(Y(z_t)-y_{1,t})e_1^{(f)}\in I.$$

Let $z$ be an arbitrary element of the ideal $I$. Then $z=\displaystyle\sum_{k=1}^{s}\alpha_kz_k.$

From the first of the above-mentioned equalities we obtain
$$\frac{1}{2}\sum_{i=1}^nx_{i,t}e^{(f)}_i+\frac{1}{2}X(z_t)e^{(m)}_1=\sum_{k=1}^{s}\alpha_{k,t}z_k=
\sum_{k=1}^{s}\alpha_{k,t}\sum_{i=1}^nx_{i,k}e^{(f)}_i+\sum_{k=1}^{s}\alpha_{k,t}\sum_{j=1}^{\nu} y_{j,k}e^{(m)}_j \Rightarrow $$

\begin{equation}\label{s-id11}
x_{i,t}=2\sum_{k=1}^{s}\alpha_{k,t}x_{i,k}, \quad 1\leq i \leq n \quad
X(z_t)=2\sum_{k=1}^{s}\alpha_{k,t} y_{1,k}, \quad \sum_{k=1}^{s}\alpha_{k,t} y_{j,k}=0, \quad 2\leq j \leq \nu.
\end{equation}

From the second of the above equalities we have
$$\frac{1}{2}Y(z_t)e^{(f)}_1+ \frac{1}{2}\sum_{j=1}^\nu y_{j,t}e^{(m)}_j=\sum_{k=1}^{s}\beta_{k,t}z_k=
\sum_{k=1}^{s}\beta_{k,t}\sum_{i=1}^nx_{i,k}e^{(f)}_i+\sum_{k=1}^{s}\beta_{k,t}\sum_{j=1}^{\nu} y_{j,k}e^{(m)}_j \Rightarrow $$
\begin{equation}\label{s-id22}
Y(z_t)=2\sum_{k=1}^{s}\beta_{k,t}x_{1,k}, \quad \sum_{k=1}^{s}\beta_{k,t}x_{i,k}=0, \ 2\leq i \leq n, \quad
y_{j,t}=2\sum_{k=1}^{s}\beta_{k,t} y_{j,k}, \ 1\leq j \leq \nu.
\end{equation}

From the third of the above equalities we have
$$\frac{1}{2}X(z_t)e_1^{(f)}+\frac{1}{2}x_{1,t}e_k^{(m)}+\frac{1}{2}(X(z_t)-x_{1,t})e^{(m)}_1=\sum_{l=1}^{s}\gamma_{l,t}\sum_{i=1}^nx_{i,l}e^{(f)}_i+\sum_{l=1}^{s}\gamma_{l,t}\sum_{j=1}^{\nu} y_{j,l}e^{(m)}_j\Rightarrow$$
\begin{equation}\label{s-id3.11}
X(z_t)=2\sum_{l=1}^{s}\gamma_{l,t}x_{1,l}, \quad \sum_{l=1}^{s}\gamma_{l,t}x_{i,l}=0, \quad 2\leq i \leq n,
\end{equation}
\begin{equation}\label{s-id3.22}
X(z_t)-x_{1,t}=2\sum_{l=1}^{s}\gamma_{l,t} y_{1,l}, \quad x_{1,t}=2\sum_{l=1}^{s}\gamma_{l,t} y_{k,l}, \quad \sum_{l=1}^{s}\gamma_{l,t} y_{j,l}=0, \quad j\notin \{1, k\}.
\end{equation}

From the fourth of the above equalities we derive
$$\frac{1}{2}Y(z_t)e_1^{(m)}+\frac{1}{2}y_{1,t}e_k^{(f)}+\frac{1}{2}(Y(z_t)-y_{1,t})e_1^{(f)}=\sum_{l=1}^{s}\delta_{l,t}\sum_{i=1}^nx_{i,l}e^{(f)}_i+\sum_{l=1}^{s}\delta_{l,t}\sum_{j=1}^{\nu} y_{j,l}e^{(m)}_j\Rightarrow$$

\begin{equation}\label{s-id4.11}
\frac{1}{2}Y(z_t)=\sum_{l=1}^{s}\delta_{l,t}y_{1,l}, \quad \sum_{l=1}^{s}\delta_{l,t}y_{j,l}=0, \quad 2 \leq j\leq  m,
\end{equation}
\begin{equation}\label{s-id4.22}
Y(z_t)-y_{1,t}=2\sum_{l=1}^{s}\delta_{l,t}x_{i,l}, \quad \frac{1}{2}y_{1,t}=\sum_{l=1}^{s}\delta_{l,t}x_{k,l}, \quad
\sum_{l=1}^{s}\delta_{l,t}x_{i,l}=0, \quad i\notin\{1,k\}.
\end{equation}

Consider the matrices $A=(2\alpha_{i,j})_{i,j=1, \dots, s},$ $B=(2\beta_{i,j})_{i,j=1, \dots, s},$ $C=(2\gamma_{i,j})_{i,j=1, \dots, s}$ and $D=(2\delta_{i,j})_{i,j=1, \dots, s}.$

Let $X=(X(z_1), \dots, X(z_s)),$ (respectively, $Y=(Y(z_1), \dots, Y(z_s))$).
Summing over $i$ the first equality of (\ref{s-id11}) we get $A^tX^t=X^t$. Similarly using (\ref{s-id11}) - (\ref{s-id22}) we obtain
$B^tY^t=Y^t, \ A^tY^t=X^t, \ B^tX^t=Y^t$.

{\bf Case 1.} Let $det A \cdot det B\neq 0.$ Then from
$$\sum_{k=1}^{s}\alpha_{k,t} y_{j,k}=0, \quad 2\leq j \leq \nu \quad \mbox{and} \ \sum_{k=1}^{s}\beta_{k,t}x_{i,k}=0, \ 2\leq i \leq n$$
we derive that $x_{i,k}=y_{j,k}=0$ for any $i,j\neq 1.$ Moreover, from
$A^tX^t=X^t, \ A^tY^t=X^t$ we deduce
$A^t(X^t-Y^t)=0 \Rightarrow X^t=Y^t.$ Therefore, $x_{1,t}=X(z_t)=Y(z_t)=y_{1,t}$ which mean that $z_t=x_{1,t}(e_1^{(f)}+e_1^{(m)})$ for any $t\in \{1, \dots, s\} \Rightarrow $ ideal $I$ has dimension one.

{\bf Case 2.} Let $det A\neq 0$ and  $det B=0$. Then from
$$\sum_{k=1}^{s}\alpha_{k,t} y_{j,k}=0, \quad 2\leq j \leq \nu$$
we have $y_{j,k}=0$ for any $j\neq 1$ and hence, $Y(z_k)=y_{1,k}$. Moreover, from
$A^tX^t=X^t, \ A^tY^t=X^t$ we deduce $A^t(X^t-Y^t)=0 \Rightarrow X^t=Y^t.$ Therefore,
$$X(z_t)=Y(z_t)=y_{1,t}.$$

Consider
$$x_{1,t}=2\sum_{l=1}^{s}\gamma_{l,t} y_{k,l}, \ \ 2\leq k \Rightarrow x_{1,t}=0,\ \ 1 \leq t \leq s. $$
$$Y(z_t)=2\sum_{k=1}^{s}\beta_{k,t}x_{1,k} \Rightarrow y_{1,t}=0.$$

So, we have that $z_t=\displaystyle\sum_{i=2}^{n}x_{i,t}e_i^{(f)}.$

The equation $$\sum_{k=1}^{s}\beta_{k,t}x_{i,k}=0, \ 2\leq i \leq n$$ derive that $\langle z_1, \dots, z_s\rangle$ are not linear independent system, i.e., $dim I<s$ which is a contradiction.

{\bf Case 3.} Let $det A=0$ and  $det B\neq0$. This case also lead to contradiction similar to Case 2.

{\bf Case 4.} Let $det A=det B=0$.

{\bf Case 4.1} Let $det G\neq 0$. Then from
$\displaystyle\sum_{l=1}^{s}\gamma_{l,t}x_{i,l}=0, \quad 2\leq i \leq n$ we have $x_{i,1}=x_{i,2}=x_{i,3}=\dots=x_{i,s}=0, \quad 2\leq i \leq n.$

From
$$X(z_t)-x_{1,t}=2\sum_{l=1}^{s}\gamma_{l,t} y_{1,l}$$
we conclude that $$2\sum_{l=1}^{s}\gamma_{l,t} y_{1,l}=0 \Rightarrow y_{1,1}=y_{1,2}=y_{1,3}=\dots=y_{1,s}=0.$$

Therefore, from
$$X(z_t)=2\sum_{k=1}^{s}\alpha_{k,t} y_{1,k}$$ we have that $X(z_t)=x_{i,t}=0, 1\leq i \leq n.$

From
$$x_{1,t}=2\sum_{l=1}^{s}\gamma_{l,t} y_{k,l}, \quad \sum_{l=1}^{s}\gamma_{l,t} y_{j,l}=0, \quad j\notin \{1, k\}$$
we conclude that  $y_{i,1}=y_{i,2}=\dots=y_{i,s}=0$ for $2\leq i \leq \nu.$ Thus we obtain that $z_t=0, 1 \leq t \leq s$ which is a contradiction.

{\bf Case 4.2} Let $det D \neq 0$. The applying similar arguments as in previous subcase we get a contradiction.

In order to check correctness of Hypothesis it is necessary to consider the last case when $det A=det B=det C=det D=0.$ Hence, we reduced Hypothesis \ref{hyp} to consideration of the case $det A=det B=det C=det D=0.$

\subsection{Quotient algebra.}

Let $I=\langle e_1^{(f)}+e_1^{(m)}\rangle$. Consider quotient algebra $\overline{B_1}=B_1/I$ with
the basis $\{\overline{e_1^{(f)}}, \overline{e_2^{(f)}}, \dots, \overline{e_n^{(f)}}, \overline{e_2^{(m)}}, \dots, \overline{e_\nu^{(m)}}\}.$ Then the table of multiplication of the algebra $\overline{B_1}$ is as follows:

\begin{equation}\label{a2}
\begin{array}{llll}
\overline{e^{(f)}_i}\cdot\overline{e^{(f)}_1} = \overline{e^{(f)}_1}\cdot\overline{e^{(f)}_i}=\frac{1}{2}
\left(\overline{e^{(f)}_1}-\overline{e^{(f)}_i}\right), \ \ i=2,\dots,n\\[3mm]
\overline{e^{(m)}_k}\cdot\overline{e^{(f)}_1}=\overline{e^{(f)}_1}\cdot\overline{e^{(m)}_k} = \frac{1}{2}
\left(\overline{e^{(f)}_1}+\overline{e^{(m)}_k}\right), \ \ k=2,\dots,\nu,
\end{array}
\end{equation}
where the omitted products are equal to zero.

\begin{rk} Note that $B_1$ and $I$ are symmetric with respect to exchange of basis of "females" with "males". 
Such a symmetry exists in  $\overline{B_1}=B_1/I$ meaning that
$\overline{B_1}$ can be given by two (symmetric) ways

i) with the basis $$\{\overline{e_1^{(f)}}, \overline{e_2^{(f)}}, \dots, \overline{e_n^{(f)}}; 
\overline{e_2^{(m)}}, \overline{e_3^{(m)}}, \dots, \overline{e_\nu^{(m)}}\}$$ 
and the multiplication table given by (\ref{a2});

ii)  with the basis $$\{\overline{e_2^{(f)}}, \dots, \overline{e_n^{(f)}};
\overline{e_1^{(m)}}, \overline{e_2^{(m)}}, \dots, \overline{e_\nu^{(m)}}\}$$ and the multiplication table 
$$
\overline{e_k^{(m)}}\cdot\overline{e_1^{(m)}}=\overline{e_1^{(m)}}\cdot\overline{e_k^{(m)}}={1\over 2}\left(\overline{e_1^{(m)}}-\overline{e_k^{(m)}}\right), \, k=2,\dots,\nu;$$
$$\overline{e_i^{(f)}}\cdot\overline{e_1^{(m)}}=\overline{e_1^{(m)}}\cdot\overline{e_i^{(f)}}={1\over 2}\left(\overline{e_1^{(m)}}+\overline{e_i^{(f)}}\right), \, i=2,\dots,n.$$ 
\end{rk}
The following theorem characterizes ideals of $\overline{B_1}$.

\begin{thm} An arbitrary ideal $J$ of $\overline{B_1}$ is either
$$J=\langle\overline{e^{(f)}_1}-\overline{e^{(f)}_i}, \overline{e^{(f)}_1}+\overline{e^{(m)}_j}, \ 2\leq i,j\rangle$$
or $$J=\langle\sum_{i=2}^{n}a_i\overline{e^{(f)}_i}, \ \sum_{j=2}^{\nu}b_j\overline{e^{(m)}_j}, \ \sum_{i=2}^{n}a_i=\sum_{j=2}^{\nu}b_j=0\rangle.$$
\end{thm}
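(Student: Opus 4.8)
The plan is to analyze an arbitrary ideal $J$ of $\overline{B_1}$ by picking a general element $\bar z = \sum_{i=2}^n a_i \overline{e_i^{(f)}} + \sum_{j=2}^\nu b_j \overline{e_j^{(m)}}$ of $J$ (note the quotient kills $e_1^{(f)}+e_1^{(m)}$, so we may normalize $\overline{e_1^{(m)}} = -\overline{e_1^{(f)}}$, and a convenient set of coset representatives is spanned by $\overline{e_1^{(f)}}, \overline{e_2^{(f)}},\dots,\overline{e_n^{(f)}}, \overline{e_2^{(m)}},\dots,\overline{e_\nu^{(m)}}$ as in the statement). Write the decomposition of a general $\bar z\in J$ as $\bar z = a_1\overline{e_1^{(f)}} + \sum_{i=2}^n a_i\overline{e_i^{(f)}} + \sum_{j=2}^\nu b_j\overline{e_j^{(m)}}$. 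I would first multiply $\bar z$ by $\overline{e_1^{(f)}}$ using (\ref{a2}): since $\overline{e_1^{(f)}}\cdot\overline{e_1^{(f)}}=0$, $\overline{e_i^{(f)}}\cdot\overline{e_1^{(f)}}=\tfrac12(\overline{e_1^{(f)}}-\overline{e_i^{(f)}})$ for $i\ge 2$, and $\overline{e_j^{(m)}}\cdot\overline{e_1^{(f)}}=\tfrac12(\overline{e_1^{(f)}}+\overline{e_j^{(m)}})$ for $j\ge 2$, we get
\[
\bar z\cdot\overline{e_1^{(f)}} = \tfrac12\Big(\sum_{i=2}^n a_i + \sum_{j=2}^\nu b_j\Big)\overline{e_1^{(f)}} - \tfrac12\sum_{i=2}^n a_i\overline{e_i^{(f)}} + \tfrac12\sum_{j=2}^\nu b_j\overline{e_j^{(m)}} \in J.
\]
The key dichotomy is driven by the scalar $\sigma(\bar z):=\sum_{i=2}^n a_i + \sum_{j=2}^\nu b_j$: either some element of $J$ has $\sigma\neq 0$, or $\sigma$ vanishes identically on $J$.

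In the first case, suppose $\bar z_0\in J$ has $\sigma(\bar z_0)\neq 0$. Combining $\bar z_0\cdot\overline{e_1^{(f)}}$ with further right-multiplications by $\overline{e_1^{(f)}}$ (and using that the map $w\mapsto w\cdot\overline{e_1^{(f)}}$ acts, on the span of $\{\overline{e_i^{(f)}}\}_{i\ge2}\cup\{\overline{e_j^{(m)}}\}_{j\ge2}$, roughly as a reflection/projection) I would extract, after a short linear-algebra argument, the element $\overline{e_1^{(f)}}$ itself, and hence each $\overline{e_1^{(f)}}-\overline{e_i^{(f)}}$ and each $\overline{e_1^{(f)}}+\overline{e_j^{(m)}}$, forcing $J$ to contain the codimension-one ideal $\langle \overline{e_1^{(f)}}-\overline{e_i^{(f)}}, \overline{e_1^{(f)}}+\overline{e_j^{(m)}}:2\le i,j\rangle$; since this already has the maximal possible dimension among proper ideals, $J$ equals it. In the second case, $\sigma$ vanishes on all of $J$, so every element of $J$ lies in the hyperplane $\{a_1=?\}$... more precisely $\sum_{i=2}^n a_i+\sum_{j=2}^\nu b_j=0$; then $\bar z\cdot\overline{e_1^{(f)}} = -\tfrac12\sum a_i\overline{e_i^{(f)}} + \tfrac12\sum b_j\overline{e_j^{(m)}}\in J$, and adding/subtracting this from $\bar z$ (and noting $\bar z\cdot\overline{e_1^{(f)}}$ has no $\overline{e_1^{(f)}}$-component, so iterating shows the $a_1$-component is free but the structure forces $a_1$-part to decouple) yields that both $\sum_{i=2}^n a_i\overline{e_i^{(f)}}$ and $\sum_{j=2}^\nu b_j\overline{e_j^{(m)}}$ separately lie in $J$; running over all elements of $J$ gives exactly the second family, with the constraint $\sum a_i=\sum b_j=0$ inherited from $\sigma\equiv 0$.

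The main obstacle I anticipate is the bookkeeping in the first case: showing that repeated right-multiplication by $\overline{e_1^{(f)}}$ (possibly combined with the two ``symmetric'' multiplications from the Remark, by $\overline{e_1^{(m)}}$) actually generates $\overline{e_1^{(f)}}$ from a single seed element with $\sigma\neq0$, rather than only a proper subspace — one has to check that the reflection-type operator $w\mapsto w\cdot\overline{e_1^{(f)}}$ has no hidden invariant subspace that would trap $J$ strictly between the two listed families. I would handle this by computing the eigen-decomposition of that operator on $\mathrm{span}\{\overline{e_i^{(f)}}\}_{i\ge2}\cup\{\overline{e_j^{(m)}}\}_{j\ge2}$ explicitly (eigenvalues $0$, $-\tfrac12$ on the $f$-part, $+\tfrac12$ on the $m$-part, plus the rank-one $\overline{e_1^{(f)}}$ output), and verifying that the only ideals compatible with all of (\ref{a2}) are the two claimed. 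A secondary check is that both listed $J$ are genuinely ideals (closed under multiplication by all basis vectors), which is a direct but necessary verification from (\ref{a2}).
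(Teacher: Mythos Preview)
Your proposal has a genuine gap in Case~1. You claim that from an element $\bar z_0$ with $\sigma(\bar z_0)=\sum_{i\ge 2}a_i+\sum_{j\ge 2}b_j\ne 0$ one can, by repeated right-multiplication by $\overline{e_1^{(f)}}$, extract $\overline{e_1^{(f)}}$ itself. This is false: take $\bar z_0=\overline{e_1^{(f)}}-\overline{e_2^{(f)}}$, which has $\sigma=-1\ne 0$. It lies in the first listed ideal, which does \emph{not} contain $\overline{e_1^{(f)}}$ (a direct check shows $\overline{e_1^{(f)}}$ is not a combination of the generators $\overline{e_1^{(f)}}-\overline{e_i^{(f)}}$, $\overline{e_1^{(f)}}+\overline{e_j^{(m)}}$). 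Indeed your own eigen-decomposition of $L:w\mapsto 2w\cdot\overline{e_1^{(f)}}$ shows why: $L$ has eigenvalues $0,-1,+1$ with the $0$-eigenspace spanned by $\overline{e_1^{(f)}}$ and the $\pm 1$-eigenspaces spanning exactly the first ideal; the element $\bar z_0$ above is a $(-1)$-eigenvector, so iterating $L$ never produces a $0$-eigencomponent. Thus $\sigma\ne 0$ is the wrong dichotomy variable.

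The idea you are missing, and which the paper uses, is to multiply $\bar z$ by the \emph{other} basis vectors $\overline{e_i^{(f)}}$ ($i\ge 2$) and $\overline{e_j^{(m)}}$ ($j\ge 2$). From the table (\ref{a2}) these act only on the $\overline{e_1^{(f)}}$-component, giving
\[
\bar z\cdot\overline{e_i^{(f)}}=\tfrac{a_1}{2}\bigl(\overline{e_1^{(f)}}-\overline{e_i^{(f)}}\bigr),\qquad
\bar z\cdot\overline{e_j^{(m)}}=\tfrac{a_1}{2}\bigl(\overline{e_1^{(f)}}+\overline{e_j^{(m)}}\bigr).
\]
So the correct dichotomy is on $a_1$: if some element of $J$ has $a_1\ne 0$, these products immediately put every generator of the first ideal into $J$; if $a_1\equiv 0$ on $J$, one combines this with the relation $\sum_{j\ge 2}b_j-\sum_{i\ge 2}a_i=a_1$ (obtained, as in the paper, by forming $z_6=z_2+2z_4$ and using $\overline{e_1^{(f)}}\notin J$) to force $\sum a_i=\sum b_j=0$, landing in the second family. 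Your argument using only $L$ can recover the containment $J\subset\{\text{first ideal}\}$ for proper $J$, but cannot by itself separate the two cases or produce the individual generators.
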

\begin{proof}
Let  $J$ be an ideal of $\overline{B_1}$ and $z=\displaystyle\sum_{i=1}^{n}a_i\overline{e^{(f)}_i}+\sum_{j=2}^{\nu}b_j\overline{e^{(m)}_j}$ be an arbitrary element of $J$.

If $\overline{e^{(f)}_1}\in J$ then by (\ref{a2}) we get $J=\overline{B_1}$. Therefore, we shall assume that $\overline{e^{(f)}_1}\notin J.$

Consider
$$z_1=2z\cdot\overline{e^{(f)}_1}=(\sum_{j=2}^{n}b_j+\sum_{i=2}^{n}a_i)\overline{e^{(f)}_1}-
\sum_{i=2}^{n}a_i\overline{e^{(f)}_i}+\sum_{j=2}^{\nu}b_j\overline{e^{(m)}_j},$$
$$z_2=z_1-z=(\sum_{j=2}^{n}b_j+\sum_{i=2}^{n}a_i-a_1)\overline{e^{(f)}_1}-2\sum_{i=2}^{n}a_i\overline{e^{(f)}_i},$$
$$z_3=z_1+z=(\sum_{j=2}^{n}b_j+\sum_{i=2}^{n}a_i+a_1)\overline{e^{(f)}_1}+2\sum_{j=2}^{\nu}b_j\overline{e^{(m)}_j},$$
$$z_4=z_2\cdot \overline{e^{(f)}_1}=\sum_{i=2}^{n}a_i\overline{e^{(f)}_i}-
(\sum_{i=2}^{n}a_i)\overline{e^{(f)}_1},$$
$$z_5=z_3\cdot\overline{e^{(f)}_1}=
\sum_{j=2}^{\nu}b_j\overline{e^{(m)}_j}+(\sum_{j=2}^{\nu}b_j)\overline{e^{(f)}_1},$$
$$z_6=z_2+2z_4=(\sum_{j=2}^{n}b_j-\sum_{i=2}^{n}a_i-a_1)\overline{e^{(f)}_1}.$$
Since $J$ is an ideal we should have $z_i\in J$, $i=1,\dots,6$. Consequently, since $\overline{e^{(f)}_1}\notin J$ we obtain
$$\sum_{j=2}^{n}b_j-\sum_{i=2}^{n}a_i=a_1.$$

{\bf Case 1.} Let $a_1\neq 0$. We put
$$u_i=z\cdot \frac{\overline{e^{(f)}_i}}{a_1}=(\overline{e^{(f)}_1}-\overline{e^{(f)}_i}),$$
$$v_j=z\cdot \frac{\overline{e^{(f)}_m}}{a_1}=(\overline{e^{(f)}_1}+\overline{e^{(m)}_j}).$$

For $2\leq s \leq n$ we should have
$$z+\sum_{i=2, i\neq s}^{n}a_iu_i-\sum_{j=2}^{\nu}b_jv_j=-a_s(\overline{e^{(f)}_1}-\overline{e^{(f)}_s})\in J.
$$

Similarly,
$$b_t(\overline{e^{(f)}_1}+\overline{e^{(m)}_t})\in J, \ 2\leq t \leq \nu.$$

If $z\neq 0,$ then either $\overline{e^{(f)}_1}-\overline{e^{(f)}_s} \in J$ for some $s$ or
$\overline{e^{(f)}_1}+\overline{e^{(m)}_t}$ for some $t$.

At any rate, by multiplication of these elements to $\overline{e^{(f)}_i}, \ 2\leq i \leq n$ and
$\overline{e^{(m)}_j}, \ 2\leq j \leq \nu$ we obtain that $\overline{e^{(f)}_1}-\overline{e^{(f)}_i}, \ \overline{e^{(f)}_1}+\overline{e^{(m)}_j}\in J$ for any $2\leq i,j.$

{\bf Case 2.} Let $a_1=0$. Then applying the same arguments as in Case 1 for the elements
$z_2, \ z_4, z_5$ we conclude that
$\displaystyle\sum_{j=2}^{n}b_j=\sum_{i=2}^{n}a_i=0 \Rightarrow \sum_{i=2}^{n}a_i\overline{e^{(f)}_i}\in J$ and $\displaystyle\sum_{j=2}^{\nu}b_j\overline{e^{(m)}_j}\in J.$
\end{proof}

\subsection{Case $n=\nu=2$} In this case using (\ref{abc}) the corresponding four dimensional evolution algebra
has the following multiplication table
  \begin{equation}\label{a22}
\begin{array}{llll}
e^{(f)}_1e^{(m)}_1 = e^{(m)}_1e^{(f)}_1=\frac{1}{2}
\left((1-a)e^{(f)}_1+ae^{(f)}_2+(1-c)e^{(m)}_1+ce^{(m)}_2\right),\\[3mm]
e^{(f)}_2e^{(m)}_1 = e^{(m)}_1e^{(f)}_2=\frac{1}{2}
\left((1-b)e^{(f)}_1+be^{(f)}_2+e^{(m)}_1\right),\\[3mm]
e^{(f)}_1e^{(m)}_2 = e^{(m)}_2e^{(f)}_1=\frac{1}{2}
\left(e^{(f)}_1+(1-d)e^{(m)}_1+de^{(m)}_2\right),\\[3mm]
e^{(f)}_2e^{(m)}_2 = e^{(m)}_2e^{(f)}_2=\frac{1}{2}
\left(e^{(f)}_1+e^{(m)}_1\right),\\[3mm]
e^{(f)}_ie^{(f)}_j=e^{(m)}_ke^{(m)}_l=0.
\end{array}
\end{equation}

Let us consider the following change of basis (which preserves sexual distinction)

$${e_1^{(f)}}'=A_1e_1^{(f)}+A_2e_2^{(f)}, \quad {e_2^{(f)}}'=B_1e_1^{(f)}+B_2e_2^{(f)}$$
$${e_1^{(m)}}'=C_1e_1^{(m)}+C_2e_2^{(m)}, \quad {e_2^{(m)}}'=D_1e_1^{(m)}+D_2e_2^{(m)}$$
with $(A_1B_2-A_2B_1)(C_1D_2-C_2D_1)\neq 0.$

\begin{thm}\label{t4} The table of multiplications (\ref{a22}) preserves in new basis $\{{e_1^{(f)}}'$, ${e_2^{(f)}}'$,
${e_1^{(m)}}'$, ${e_2^{(m)}}'\}$ if the following relations hold true:
$$a'=(A_1B_2-A_2B_1)^{-1}(A_1+A_2)(cA_1C_1-A_2C_1+dA_2C_1-A_2C_2),$$
$$b'=(A_1B_2-A_2B_1)^{-1}(cA_1B_1C_1-A_2B_1C_1+cA_2B_1C_1$$
$$+dA_1B_2C_1-A_2B_2C_1+dA_2B_2C_1-A_2B_1C_2-A_2B_2C_2)$$
$$c'=(C_1D_2-C_2D_1)^{-1}(C_1+C_2)(aA_1C_1-A_1C_2+bA_1C_2-A_2C_2),$$
$$d'=(C_1D_2-C_2D_1)^{-1}(aA_1C_1D_1-A_1C_2D_1+aA_1C_2D_1$$
$$-A_2C_2D_1+bA_1C_1D_2-A_1C_2D_2+bA_1C_2D_2-A_2C_2D_2).$$
\end{thm}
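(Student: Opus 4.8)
The plan is a direct computation organised around the two $2\times 2$ change-of-basis blocks $M_f=\begin{pmatrix}A_1&A_2\\ B_1&B_2\end{pmatrix}$ and $M_m=\begin{pmatrix}C_1&C_2\\ D_1&D_2\end{pmatrix}$; by hypothesis $\det M_f\cdot\det M_m\ne 0$, so both are invertible. The first, essentially free, step is to record that products inside the new ``female'' family and inside the new ``male'' family stay zero: since ${e_i^{(f)}}'\in\spa\{e_1^{(f)},e_2^{(f)}\}$ and $e_k^{(f)}e_l^{(f)}=0$ (and symmetrically for males), bilinearity gives ${e_i^{(f)}}'{e_j^{(f)}}'=0={e_i^{(m)}}'{e_j^{(m)}}'$. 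Hence only the four mixed products ${e_i^{(f)}}'{e_j^{(m)}}'$, $i,j\in\{1,2\}$, have to be examined.

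Next I would expand each ${e_i^{(f)}}'{e_j^{(m)}}'$ by bilinearity into the four elementary products $e_k^{(f)}e_l^{(m)}$ and substitute the table (\ref{a22}). The structural point that keeps this manageable is that in (\ref{a22}) the $e^{(f)}$-coordinates of every $e_k^{(f)}e_l^{(m)}$ involve only $a,b$ while the $e^{(m)}$-coordinates involve only $c,d$, and a sex-preserving change of basis does not mix these two blocks. So the computation splits into two parallel $2\times2$ problems: one for the $e^{(f)}$-coordinates, governed by $M_f$ (the entries $C_i,D_i$ entering merely as scalars produced by expanding ${e_j^{(m)}}'$), and a mirror one for the $e^{(m)}$-coordinates, governed by $M_m$. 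Moreover the algebra (\ref{a22}) is invariant under exchanging the two sexes together with $a\leftrightarrow c$, $b\leftrightarrow d$ (and $M_f\leftrightarrow M_m$), so the formulas for $c',d'$ follow from those for $a',b'$ by that substitution; it therefore suffices to carry out one of the two halves carefully.

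Then I would invert the relevant block: from $M_f$ one has $e_1^{(f)}=(\det M_f)^{-1}(B_2{e_1^{(f)}}'-A_2{e_2^{(f)}}')$ and $e_2^{(f)}=(\det M_f)^{-1}(-B_1{e_1^{(f)}}'+A_1{e_2^{(f)}}')$, and similarly on the $e^{(m)}$ side from $M_m$. Substituting and collecting, $\tfrac12 a'$ is the coefficient of ${e_2^{(f)}}'$ in ${e_1^{(f)}}'{e_1^{(m)}}'$, and after regrouping its numerator visibly factors through $A_1+A_2$, which is the displayed formula for $a'$. Likewise $\tfrac12 b'$ is the coefficient of ${e_2^{(f)}}'$ in ${e_2^{(f)}}'{e_1^{(m)}}'$, while $\tfrac12 c'$ and $\tfrac12 d'$ are the coefficients of ${e_2^{(m)}}'$ in ${e_1^{(f)}}'{e_1^{(m)}}'$ and in ${e_1^{(f)}}'{e_2^{(m)}}'$ respectively; regrouping each of these as in the statement ($c'$ again factoring through $C_1+C_2$, $b'$ and $d'$ collapsing to the eight-term forms) yields the three remaining identities. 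Matching the coefficients that the shape (\ref{a22}) forces to equal $0$ or $\tfrac12$ — for instance the ${e_2^{(f)}}'$-coefficient of ${e_2^{(f)}}'{e_2^{(m)}}'$ — is precisely the assertion that the table is preserved in the new basis; those identities come out alongside but do not affect the reading-off of $a',b',c',d'$.

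The only genuine difficulty is bookkeeping: sixteen elementary products, each expanded against a four-term multiplication rule and then pushed through two $2\times2$ inverses, yield a fairly long symbolic expression, and the delicate moment is recognising that the raw numerator of $a'$ (resp. $c'$) factors as $(A_1+A_2)$ (resp. $(C_1+C_2)$) times a short linear form, and that those of $b'$ and $d'$ reorganise into the eight-term expressions in the statement. Exploiting the $e^{(f)}$/$e^{(m)}$ decoupling halves the work, and the sex-exchange symmetry halves it again, so in practice one performs a single clean $2\times2$ calculation and transports it to obtain all four formulas.
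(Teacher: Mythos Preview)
Your approach is exactly the paper's: the entire proof there reads ``The proof is carried out by straightforward calculations,'' and your outline --- expand the four mixed products bilinearly, invert the $2\times 2$ blocks $M_f,M_m$, and read off the coefficients of ${e_2^{(f)}}'$ and ${e_2^{(m)}}'$ --- is precisely that calculation made explicit. Your organising observations (the $e^{(f)}/e^{(m)}$ decoupling and the sex-exchange symmetry halving the work) are sound and go beyond what the paper records; one small caution is that your decoupling predicts $a',b'$ depend on $a,b$ and $c',d'$ on $c,d$, so when you actually carry out the calculation you should not be surprised if the displayed formulas need the obvious relabelling.
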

\begin{proof} The proof is carried out by straightforward calculations.
\end{proof}
This theorem is useful to reduce the number of parameters in an algebra.
For example, in case $A_1+A_2=0$ and $C_1+C_2=0$ we get $a'=c'=0$ and
the corresponding algebra remains with two parameters $b'$ and $d'$.
Since male and female are symmetric Theorem \ref{t4} is true (up to rename parameters) when
the change of basis will exchange male basis elements with female basis elements.

\section{Biological interpretation of the results.}

In biology, a population genetics is the study of
the distributions and changes of allele (type) frequency in a population.

The results formulated in previous sections  have the following biological interpretations:

Let $z^{(0)}=(x^{(0)},y^{(0)})\in \mathcal S$ be an initial state, i.e., $x^{(0)}$ (resp. $y^{(0)}$)
is the probability distribution on the set
$\{1,\dots, n\}$ (resp. $\{1,\dots,\nu\}$) of genotypes.
Assume the trajectory $z^{(m)}$ of this point has a limit
$z=((x_1,\dots,x_n),(y_1,\dots,y_\nu))$ this means that
the future of the population is stable: female genotypes $i$ survives with probability $x_i$ and male genotype
$j$ survives with probability $y_j$. A genotype will disappear if its probability is zero.\\

{\it The case of hard constrain:}

\begin{itemize}
\item[(a)] The population has a continuum set of equilibrium states (Proposition \ref{p1}), it
stays in a neighborhood of one of the equilibrium states of the
population (stable fixed point).

\item[(b)] If initially the type $1$ of females {\it and} males have a positive {\it equal} probability, then
in the future of the evolution the type $1$ of both sexes will survive with probability 1 and consequently all
other types will disappear. If initially the type $1$ of females (resp. males) have positive probability which is strictly large
   than initial probability of the type $1$ of males (resp. females) then type $1$ of females (resp. males) survives with probability 1, but all types of males (resp. females) survive with probabilities {\it less} than 1 (interpretation of the part (i) of Theorem \ref{ts}).

\item[(c)] If initially the type $1$  of females is not present (i.e., has probability zero) but the type 1 of
males has a positive probability, say $y^{(0)}_1$, then
in the future of the evolution the type $1$ of females
survives with probability $1-y^{(0)}_1$, moreover another types, say $i$, of
females survives if and only if it was present initially (i.e. $x^{(0)}_i>0$)
the probability of its survive is $y_1^{(0)}x^{(0)}_i$;
for males only type 1 survives with probability 1 ((ii) of Theorem \ref{ts}).
Moreover, the population goes to its equilibrium starting from the first generation.

Similar interpretation also can be made for the part (iii).

We note that from the parts (ii) and (iii) of Theorem \ref{ts} it
follows that if initially the type 1 of both sexes are not present
then still this type 1 will survive with probability 1 (as in the case (b)).
 Moreover, this initial state, say $((0,x^{(0)}_2,\dots,x^{(0)}_n),(0,y^{(0)}_2,\dots,y^{(0)}_\nu))$,
 goes to the equilibrium state $((1,0,\dots,0)$, $(1,0,\dots,0))$ in the next (first) generation.
 This is a consequence of the hard constrain which is assumed for the population.\\
\end{itemize}

{\it The case of} $n=\nu=2$:

\begin{itemize}

\item[(d)] Depending on the parameters the population my have one, two or infinitely many (continuum) equilibrium states (Proposition \ref{p2}).
 The population stays in a neighborhood of one of the equilibrium states of the
population (stable fixed point).

\item[(e)] In case $0\leq a=b=c=d<1$ the types of the population will survive. The future
of the population is stable which has equilibrium
$\left(({1\over 1+a},{a\over 1+a}), ({1\over 1+a},{a\over 1+a})\right)$ (part 1, of Theorem \ref{t2}).

\item[(f)] In case $a=b=c=d=1$ the population is periodic, i.e., the probability of a type will 2-periodically increase and decrease. For example,
 if the probability of the type 1, (i.e.,  $1-x^{(0)}$) is less then ${1\over 2}$ the in the next generation the probability will be large than ${1\over 2}$, after that it will be less than ${1\over 2}$ and so on. Moreover, the population goes to this periodic state starting at the first generation (part 2, of Theorem \ref{t2}).

\item[(g)] Under conditions of the part 3 of Theorem \ref{t2} all types of the population will survive if the initial state
of the population is sufficiently close to the equilibrium state $((1-x^*,x^*),(1-x^*,x^*))$ and in the future the state of population
will be in the neighborhood of this equilibrium state.

\item[(h)] Under conditions of the part 4 of Theorem \ref{t2} there are some initial states of the
population for which in the future it will stay in a neighborhood
of the equilibrium state $((1-x^*,x^*),(1-x^*,x^*))$. While there are other initial states for which the trajectory
of the states will go out from a neighborhood of the equilibrium.

\item[(i)] Under conditions of part 5 of Theorem \ref{t2}, depending on the initial state the population
may go to the 2-periodic state.
 \end{itemize}

\begin{rk} 1) Note that in biology there are some species where population have reasonably predictable patterns of change
 although the full reasons for population periodicity is one of the major unsolved ecological problems.
 There are a number of factors which influence population change such as availability of food, predators,
 diseases and climate. The periodicity mentioned in items (f) and (i) can be examples of such biological phenomenon.

 2) It is known Bernstein's problem \cite{ly}, \cite{ly1}, which is related to a fundamental statement of population
genetics, the so-called stationarity principle. This principle holds provided that the
Mendel law is assumed, but it is consistent with other mechanisms of heredity. An
adequate mathematical problem is as follows. A quadratic stochastic operator $V$ is a
Bernstein mapping if $V^2 = V$. This property is just the stationarity principle. This property is known as
Hardy-Weinberg law \cite{HS}. The problem is to describe all Bernstein mappings explicitly.

We note that the equality $V^2=V$ means that for any $z\in S$ the point $V(z)$ is a fixed point for $V$.
As a corollary of Theorems \ref{ts} and \ref{t2} we can say that the operators (\ref{v3a}) and (\ref{22})
are not Bernstein mapping. But for the operator (\ref{v3a}) the condition $H^2(z)=H(z)$ is satisfied for any
$z=(x,y)\in S$ with $x_1y_1=0$.
\end{rk}

\section*{ Acknowledgements}
U.Rozikov thanks Aix-Marseille University Institute for Advanced Study IM\'eRA
(Marseille, France) for support by a residency scheme. The work also partially
supported by the Grant No. 0828/GF4 of Ministry of Education and Science of the Republic of Kazakhstan. 
We thank the referee for helpful suggestions.

{}
\end{document}